\numberwithin{equation}{section}
\numberwithin{figure}{section}
\theoremstyle{plain}
\newtheorem{thm}{\protect\theoremname}
\theoremstyle{definition}
\newtheorem{defn}[thm]{\protect\definitionname}
\theoremstyle{plain}
\newtheorem{lem}[thm]{\protect\lemmaname}
\theoremstyle{plain}
\newtheorem{conjecture}[thm]{\protect\conjecturename}
\theoremstyle{plain}
\providecommand{\conjecturename}{Conjecture}
\providecommand{\definitionname}{Definition}
\providecommand{\lemmaname}{Lemma}
\providecommand{\propositionname}{Proposition}
\providecommand{\theoremname}{Theorem}
\begin{document}

\title{The centered convex body whose marginals have the heaviest tails}
\author{yam eitan}

\maketitle
\renewcommand{\partname}{Chapter}

\section*{abstract}

Given any real numbers $1<p<q$, we study the norm ratio (i.e. the
ratio between the $q$-norm and the $p$-norm) of marginals of \textit{centered}
convex bodies. We first show that some marginal of the simplex maximizes
said ratio in the class of $n$-dimensional centered convex bodies. We
then pass to the dimension independent (i.e. log-concave) case where
we find a 1-parameter family of random variables in which the maximum
ratio must be attained, and find the exact maximizer of the ratio
when $p=2$ and $q$ is even. In addition, we find another interesting
maximization property of marginals of the simplex involving functions
with positive third derivatives.

\section{introduction}

Let $1<p<q$, $K\subset\mathbb{R}^{n}$ be a convex body and $X\thicksim Uni(K)$
be a random vector uniformly distributed over K. From here forth we
shall assume K is centered (i.e. $\mathbb{E}[X]=0$). Define:
\[
\alpha(K):=\max_{\theta\in S^{n-1}}\frac{\lVert X\cdot\theta\rVert_{q}}{\lVert X\cdot\theta\rvert\rvert_{p}}.
\]
Here, $S^{n-1}$ is the $(n-1)-$dimensional unit sphere, centered
at the origin of $\mathbb{R}^{n}$. This quantity can be thought of
as a measurement of the ``heaviest tail'' of any 1-dimensional projection
of X. Our main result is the following:
\begin{thm}
Let $K\subset\mathbb{R}^{n}$ be a centered convex body and let $\Delta_{n}$
be an n-dimensional centered simplex. Then:
\begin{equation}
\alpha(K)\leq\alpha(\Delta_{n}).
\end{equation}
\end{thm}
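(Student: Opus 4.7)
The plan is to reduce to a 1-dimensional extremal problem and identify the extremizer as a marginal of the simplex. Fix $\theta\in S^{n-1}$ realizing $\alpha(K)$ and let $f_\theta$ be the density of $X\cdot\theta$. By the Brunn--Minkowski inequality applied to the cross-sections $\{x\cdot\theta=t\}\cap K$, $f_\theta^{1/(n-1)}$ is concave on its support, i.e.\ $f_\theta$ is $\tfrac{1}{n-1}$-concave in Borell's sense. Since the class of 1-dimensional marginals of centered convex bodies in $\mathbb{R}^n$ is contained in the larger class of centered $\tfrac{1}{n-1}$-concave probability densities on $\mathbb{R}$, it suffices to show that on the latter class the ratio $\bigl(\int|t|^q f\,dt\bigr)^{1/q}/\bigl(\int|t|^p f\,dt\bigr)^{1/p}$ is maximized by a density that happens to be a marginal of $\Delta_n$.

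By scale invariance I would normalize $\int|t|^p f\,dt=1$ and instead maximize the \emph{linear} functional $f\mapsto\int|t|^q f\,dt$ over the convex set
\[
\mathcal{C}=\Bigl\{f\ge 0:\ \textstyle\int f=1,\ \int tf\,dt=0,\ \int|t|^p f\,dt=1,\ f^{1/(n-1)}\text{ concave on }\{f>0\}\Bigr\}.
\]
A standard compactness argument (tightness from the $p$th-moment bound, together with pointwise subsequential convergence of uniformly bounded concave functions) gives existence of a maximizer $f_*\in\mathcal{C}$. Since the objective is linear on the convex set $\mathcal{C}$, this maximum is attained at an extreme point.

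The main step, which I expect to be the hardest part, is to show that every extreme point $f_*\in\mathcal{C}$ has the form $f_*(t)=c(A-Bt)_+^{n-1}\mathbf{1}_{[t_1,t_2]}(t)$, equivalently that the concave generator $g_*=f_*^{1/(n-1)}$ is affine on a single interval. The idea is of Choquet flavor: if $g_*$ is strictly concave somewhere in the interior of its support, construct a piecewise-affine perturbation $\delta g$, supported on two small sub-intervals of $\{g_*>0\}$, with $g_*\pm\delta g$ both still concave and such that the three linear constraints (total mass, mean, and $p$th moment) are preserved to first order; the three free parameters in $\delta g$ (two interval positions and a relative amplitude) match exactly the three constraints. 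This writes $f_*$ as a nontrivial convex combination of two elements of $\mathcal{C}$, contradicting extremality.

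Finally, a direct computation shows that any density of the form $c(A-Bt)_+^{n-1}\mathbf{1}_{[t_1,t_2]}$ is the marginal of the uniform measure on a centered $n$-simplex taken in the direction from a vertex through the centroid of the opposite facet. By affine invariance of $\alpha$ the value of this ratio is the same for every centered $n$-simplex, and equals $\alpha(\Delta_n)$; this yields $\alpha(K)\le\alpha(\Delta_n)$ as claimed.
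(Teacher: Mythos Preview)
Your reduction to centered $\tfrac{1}{n-1}$-concave densities on $\mathbb{R}$ and the plan to maximize a linear functional over the convex set $\mathcal{C}$ is reasonable, and it is a genuinely different route from the paper's. The paper never invokes extreme points or Choquet theory; instead it proves an auxiliary inequality for the signed functional $\mathbb{E}[|X\cdot\theta|^q\mathrm{sgn}(X\cdot\theta)]$, then compares $f_\theta$ directly with a member of a one-parameter family $g_n^s$ of simplex marginals via a sign-change argument based on the Chebyshev system $\{1,x,|x|^p,|x|^r\mathrm{sgn}(x),|x|^q\}$. The moment constraints force the difference $g_n^{s_0}-f_\theta$ to change sign more times than the $\tfrac{1}{n-1}$-concavity of $f_\theta$ and the piecewise-affine shape of $(g_n^{s_0})^{1/(n-1)}$ allow.

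The substantive gap in your proposal is the claimed structure of the extreme points. Your perturbation argument, taken at face value, only shows that an extreme $g_*=f_*^{1/(n-1)}$ cannot be \emph{strictly} concave on any open sub-interval; it does \emph{not} force $g_*$ to be affine on a single interval. With three affine constraints (mass, mean, $p$th moment), the extreme points of $\mathcal{C}$ also include densities whose concave root is piecewise affine with \emph{two} pieces --- a ``roof'' function --- and these are not of the form $c(A-Bt)_+^{n-1}\mathbf{1}_{[t_1,t_2]}$. Such roof densities arise exactly as the simplex marginals $\Gamma_n^s$ for $0<s<1$ (directions interpolating between two facet normals), and the paper explicitly remarks that for certain $p,q$ the maximum $\alpha(\Delta_n)$ is attained at one of these intermediate $s$ and \emph{not} at the cone marginal. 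Hence your final identification --- that every extremal density is a vertex-to-facet marginal and therefore realizes $\alpha(\Delta_n)$ in that specific direction --- is too strong and in fact false. The argument is salvageable: once you enlarge the list of extreme points to include the two-piece roofs, you must then check that each of them is also a marginal of $\Delta_n$ (it is, but along a different direction than the one you name), after which the conclusion follows.
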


While one might not be surprised that the simplex is a maximizer of
$\alpha(\cdot),$ as it is a maximizer of many other functionals over
the family of convex bodies (e.g. \cite{key-13}), the unit vector
for which the maximum ratio is achieved is not always one of the normals
of the simplex. In other words, for certain values of $p$ and $q$
there exist non-conic maximizers of $\alpha(\cdot)$. The relation
between $p,q,n,$ and the unit vector maximizing the norm ratio for
the simplex remains elusive thus far. Recall now the definition of
a random vector in isotropic position:
\begin{defn}
A random vector $X=(X_{1},\ldots,X_{n})$ is said to be in isotropic
position if:
\[
\mathbb{E}[X]=0,\hspace{3em}\mathbb{E}[X_{i}X_{j}]=\delta_{i,j},\hspace{3em}(\forall0\leq i,j\leq n),
\]
where $\delta_{i,j}$ is the Dirac delta. A convex body is said to
be in isotropic position if $X\sim Uni(K)$ is in isotropic position.
\end{defn}

Any convex body can be transformed into an isotropic one through an
affine map (see \cite{key-3} for proof). Our second main result is
the following:
\begin{thm}
Let $K\subset\mathbb{R}^{n}$ be a convex body in isotropic position
and $X\thicksim Uni(K)$ be a random variable uniformly distributed
over K. Then for any $k\in\{3,4\},$ $\phi\in C^{k}(\mathbb{R})$
such that $\phi^{(k)}(x)>0\ \forall x\in\mathbb{R}$, we have:
\begin{equation}
\max_{\theta\in S^{n-1}}\mathbb{E}[\phi(X\cdot\theta)]\leq\max_{\theta\in S^{n-1}}\mathbb{E}[\phi(\Gamma_{n}\cdot\theta)],
\end{equation}
where $\Gamma_{n}\thicksim Uni(\Delta_{n})$ is also in isotropic
position.
\end{thm}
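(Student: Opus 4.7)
The first step is an integral representation. Since $\phi^{(k)}>0$ and $X\cdot\theta$ is almost surely supported in some bounded interval $[-M,M]$, a Taylor expansion of $\phi$ at $x=-M$ with integral remainder yields, for $x\in[-M,M]$,
\[
\phi(x)=P(x)+\frac{1}{(k-1)!}\int_{-M}^{M}(x-t)_{+}^{k-1}\phi^{(k)}(t)\,dt,
\]
where $P$ is a polynomial of degree at most $k-1$. Taking expectations, isotropy of $K$ pins the moments $\mathbb{E}[(X\cdot\theta)^{j}]$ for $j\in\{0,1,2\}$ to values independent of both $\theta$ and $K$. Hence for $k=3$, $P$ has degree $\le 2$ and $\mathbb{E}[P(X\cdot\theta)]$ is a universal constant. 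For $k=4$ the only surviving polynomial term is the cubic moment $\mathbb{E}[(X\cdot\theta)^{3}]$, which I would control separately by invoking the $k=3$ case of the theorem applied to $\phi(x)=\pm x^{3}$ (whose third derivative is a nonzero constant, and whose sign is handled by the symmetry $\theta\mapsto-\theta$).

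The statement then reduces to the following dominance lemma: for every isotropic $K\subset\mathbb{R}^{n}$ and every $\theta_{0}\in S^{n-1}$ there exists $\theta^{*}\in S^{n-1}$ such that
\[
\mathbb{E}\!\left[(X\cdot\theta_{0}-t)_{+}^{k-1}\right]\le \mathbb{E}\!\left[(\Gamma_{n}\cdot\theta^{*}-t)_{+}^{k-1}\right]\qquad\text{for all }t\in\mathbb{R}.
\]
Equivalently, some simplex marginal dominates $X\cdot\theta_{0}$ in the $(k-1)$-increasing-convex stochastic order. Integrating this pointwise inequality against the positive measure $d\mu(t)=\phi^{(k)}(t)\,dt/(k-1)!$ and adding back the matched polynomial contributions yields the theorem.

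To establish the lemma I would first pass from the $n$-dimensional body to the one-dimensional log-concave density $f_{\theta_{0}}$ of $X\cdot\theta_{0}$, which inherits from $K$ mean $0$, variance $1$, and density-at-the-mode bounds of Hensley/Fradelizi type coming from the $n$-dimensional convexity. I would then set up the variational problem of maximizing $\int(z-t)_{+}^{k-1}f(z)\,dz$ (with $t$ fixed) over this class of log-concave densities, via a Lagrangian computation, and combine it with the extremal-direction machinery developed in the proof of Theorem~1. The expectation is that the extremal $f$ is, up to affine reparametrization, a Beta-type density, precisely matching the explicit form of the simplex marginals $\Gamma_{n}\cdot\theta^{*}$.

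The principal obstacle is \emph{uniformity in} $t$: a priori the maximizer $\theta^{*}$ in the pointwise variational problem may depend on $t$, whereas the lemma demands a single $\theta^{*}$ valid for all $t$ simultaneously. The restriction $k\in\{3,4\}$ should enter precisely here---only for these two small values is the one-parameter family of truncated powers $\{(x-t)_{+}^{k-1}\}_{t\in\mathbb{R}}$ rigid enough (through a monotonicity-of-ratios argument on the extremal density) to admit a universal extremal direction, whereas for $k\ge 5$ the extremal $\theta^{*}$ genuinely varies with $t$ and the present strategy breaks down.
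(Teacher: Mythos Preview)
Your route is genuinely different from the paper's, but it is not a proof: the entire argument is deferred to a ``dominance lemma'' that you do not establish, and the sketch you offer for it (a Lagrangian over log-concave densities, followed by an unspecified monotonicity-of-ratios argument to secure uniformity in $t$) is not carried out. Note moreover that your dominance lemma is essentially a \emph{reformulation} of the theorem rather than a reduction of it: with the first $k-1$ moments matched, the inequality $\mathbb{E}[(X\cdot\theta_{0}-t)_{+}^{k-1}]\le\mathbb{E}[(\Gamma_{n}\cdot\theta^{*}-t)_{+}^{k-1}]$ for all $t$ is equivalent to $\mathbb{E}[\phi(X\cdot\theta_{0})]\le\mathbb{E}[\phi(\Gamma_{n}\cdot\theta^{*})]$ for all $\phi$ with $\phi^{(k)}\ge 0$. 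So the Peano-kernel decomposition buys nothing unless you have an independent proof of the lemma, and you do not. There is also a specific error in the $k=4$ step: you propose to control the cubic term in $P$ ``separately'' via the $k=3$ case applied to $\pm x^{3}$, but the Taylor coefficient $\phi'''(-M)/6$ multiplying $m_{3}(X,\theta_{0})$ can have either sign (take $\phi(x)=x^{4}$), and in any case the direction achieving the cubic-moment bound need not coincide with the $\theta^{*}$ you use for the remainder integral. What is actually required is that the cubic moments \emph{match} for the same $\theta^{*}$, not that they are bounded.

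The paper sidesteps all of this with a direct sign-counting argument. It first observes (Lemma~11, via iterated Rolle) that $\{1,x,\dots,x^{k-1},\phi\}$ is a Chebyshev system whenever $\phi^{(k)}>0$, and then runs the proofs of Theorems~10 and~1 verbatim with this system replacing $\{1,x,|x|^{p},|x|^{q}\mathrm{sgn}(x)\}$ and its five-function extension. Concretely: for $k=3$ one compares $f_{\theta}$ with $g_{n}^{0}$; isotropy matches moments $0,1,2$, and if $\int\phi f_{\theta}>\int\phi g_{n}^{0}$ the Chebyshev property forces $h=g_{n}^{0}-f_{\theta}$ to change sign at least four times, contradicting the Brunn-principle bound of three. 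For $k=4$ one first applies the $k=3$ case to $\phi(x)=x^{3}$ (and its reflection) to trap $m_{3}(X,\theta)$ between $m_{3}(\Gamma_{n}^{1})$ and $m_{3}(\Gamma_{n}^{0})$, then picks $s_{0}$ by the intermediate value theorem so that $m_{3}(\Gamma_{n}^{s_{0}})=m_{3}(X,\theta)$; now moments $0,1,2,3$ all match, the Chebyshev system $\{1,x,x^{2},x^{3},\phi\}$ forces at least five sign changes of $h=g_{n}^{s_{0}}-f_{\theta}$, and the piecewise-affine structure of $(g_{n}^{s_{0}})^{1/(n-1)}$ allows at most four. No integral representation, no stochastic-order lemma, and the role of the restriction $k\in\{3,4\}$ is transparent: it is exactly the number of sign changes the comparison densities can support.
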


Here if $k=3$, then the inequality is strict unless K is a cone,
in which case the maximizing unit vector is normal to (one of) the
cone's base(s). However, if $k=4$ this isn't the case, as for some
$\phi$ there exist non-conic maximizers. A version of Theorems 1
and 3 for the symmetric log-concave case can be found in Eskenazis,
Nayar, and Tkocs \cite{key-5}. In fact, we shall use techniques similar
to those presented in the said paper here. Recall that a random variable
$X$ is called log-concave if it has a density $f_{X}$ with respect
to the Lebesgue measure such that $log\left(f_{x}(\cdot)\right)$
is concave on the support of $f_{X},$ where $log$ is the natural
logarithm. The set of log-concave random variables can be defined
equivalently as the closure of the set of random variables of the
form $X\cdot\theta$ for all convex bodies (regardless of the dimension).
Thus our main result concerning the dimension independent case is
the following:
\begin{thm}
Let X be a centered log concave random variable, and let $n$ be an
even integer. We then have:
\begin{equation}
\frac{\lVert X\rVert_{n}}{\lVert X\lVert_{2}}\leq\left(n!\sum_{i=0}^{n}\frac{(-1)^{k}}{k!}\right)^{\frac{1}{n}}=\left(!n\right)^{\frac{1}{n}},
\end{equation}
with equality if and only if $X$ coincides with $\pm\Gamma$ in law.
Here, $!n$ is the subfactorial of $n$ and $\Gamma$ is a random
variable with density $g(x)=e^{-(x+1)}$ supported on the line $[-1,\infty)$.
\end{thm}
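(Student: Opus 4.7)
By $1$-homogeneity in $X$, I may normalize $\|X\|_2 = 1$, reducing the inequality to $\mathbb{E}[X^n] \leq\ !n$ over centered log-concave $X$ with unit variance, with equality exactly at $\pm\Gamma$. A direct check that the constant is sharp: writing $\Gamma = E-1$ with $E \sim \mathrm{Exp}(1)$, one has $\mathbb{E}[\Gamma] = 0$, $\mathbb{E}[\Gamma^2] = \mathrm{Var}(E) = 1$, and
\[
\mathbb{E}[\Gamma^n] = \sum_{k=0}^{n}\binom{n}{k}(-1)^{n-k}\mathbb{E}[E^k] = \sum_{k=0}^{n}\frac{n!}{(n-k)!}(-1)^{n-k} =\ !n.
\]
My plan is a direct variational analysis on log-concave densities, adapting the strategy of \cite{key-5} from the symmetric setting, and then recognizing the extremizer as $\pm \Gamma$.

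First I would establish existence of a maximizer. Borell's lemma gives uniform sub-exponential tails for centered log-concave densities of unit variance, so the class is tight and the $n$-th moment is continuous under weak convergence, while weak limits of log-concave densities are log-concave. A compactness argument then produces a maximizing density $f^\star = e^{-\varphi^\star}$ with $\varphi^\star$ convex. Next, introducing Lagrange multipliers $\lambda_0, \lambda_1, \lambda_2$ for the three constraints $\int f = 1$, $\int xf = 0$, $\int x^2 f = 1$, I would perform a first-variation analysis $\varphi^\star \mapsto \varphi^\star + \varepsilon \psi$. On any open sub-interval of the support where $\varphi^\star$ is strictly convex, $C^2$ perturbations of either sign preserve convexity for small $|\varepsilon|$, so the Euler--Lagrange identity becomes $x^n = \lambda_0 + \lambda_1 x + \lambda_2 x^2$ on that interval. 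Since this polynomial identity cannot hold on any open set for $n \geq 3$, $\varphi^\star$ must be affine on the interior of its support; equivalently, $f^\star$ is a piecewise exponential. At breakpoints and at the endpoints of the support only one-sided perturbations are admissible (only convex $\psi$ preserve convexity on an affine piece), which weakens the identity to a sign condition on $x^n - \lambda_0 - \lambda_1 x - \lambda_2 x^2$ over the support.

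A count of free parameters against the three constraints leaves a low-dimensional family of candidate extremizers: one-piece exponentials on a half-line, exponentials or uniform densities on a bounded interval, and two-piece exponentials on $\mathbb{R}$ or a half-line. I would use the sign condition above together with the moment constraints (especially $\mathbb{E}[X] = 0$) to rule out the bounded-support and two-piece candidates, leaving only the one-sided exponential family, which the three moment constraints force to be either $\Gamma$ or $-\Gamma$. As an independent sanity check, Theorem~1 yields the same constant asymptotically: the facet-normal marginal of the centered $d$-simplex has density proportional to $(1-t/d)^{d-1}$ on a bounded interval, and after rescaling to unit variance converges pointwise to the density $e^{-(t+1)}\mathbf{1}_{[-1,\infty)}$ of $\Gamma$, so $\lim_{d \to \infty}\alpha(\Delta_d) = (!n)^{1/n}$.

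The main obstacle is the one-sided variational step at breakpoints of $\varphi^\star$ and at the endpoints of its support, where the Euler--Lagrange identity weakens to an inequality. Combining this sign condition on the polynomial $x^n - \lambda_0 - \lambda_1 x - \lambda_2 x^2$ with the centering constraint to eliminate multi-piece and bounded-support candidates, and thereby pinning the extremizer to $\pm\Gamma$, is the centered analogue of the symmetric rigidity result of \cite{key-5} and will require the most care.
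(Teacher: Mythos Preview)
Your route is genuinely different from the paper's, and the step you yourself flag as ``the main obstacle'' is a real gap rather than a routine detail.

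The paper does not argue variationally. It first proves, by the same Chebyshev-system\,/\,sign-change method that underlies Theorem~1, that every centered log-concave $X$ satisfies
\[
\frac{\lVert X\rVert_n}{\lVert X\rVert_2}\ \le\ \max_{0\le s\le 1}\frac{\lVert \Gamma^s\rVert_n}{\lVert \Gamma^s\rVert_2},
\qquad \Gamma^s:=s\Gamma-(1-s)\Gamma',\quad \Gamma,\Gamma'\ \text{i.i.d.}
\]
(this is Theorem~12). After reparametrising to $\cos t\,\Gamma-\sin t\,\Gamma'$ so that the variance is $1$, what remains is to show $\mu_n(\cos t\,\Gamma-\sin t\,\Gamma')<\mu_n(\Gamma)$ for $0<t<\pi/2$. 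The decisive idea is to pass to \emph{cumulants}: one computes $k_m(\Gamma)=(m-1)!$ for $m\ge 2$, so by additivity on independent sums
\[
\lvert k_m(\cos t\,\Gamma-\sin t\,\Gamma')\rvert
=\lvert\cos^m t+(-\sin t)^m\rvert\,(m-1)!
<(m-1)!=\lvert k_m(\Gamma)\rvert,
\]
the strict inequality being $\lVert\cdot\rVert_m<\lVert\cdot\rVert_2$ on a unit vector with two nonzero entries. Feeding this into the recursion $\mu_n=\sum_i\binom{n-1}{i-1}k_i\,\mu_{n-i}$ and inducting on $n$ gives the strict moment inequality and the equality case.

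Your Euler--Lagrange reduction to piecewise-exponential extremizers is plausible (though you have not supplied the argument bounding the number of affine pieces of $\varphi^\star$), but it does not finish the problem. The centered, unit-variance two-piece exponentials on $\mathbb R$ that survive your reduction are, up to reparametrisation, exactly the asymmetric-Laplace family $\Gamma^s$ above: they all satisfy your three moment constraints, so ``the moment constraints (especially $\mathbb E[X]=0$)'' cannot eliminate them, and you give no indication that the one-sided sign condition on $x^n-\lambda_0-\lambda_1 x-\lambda_2 x^2$ does so either. In other words, even if your variational step is carried out cleanly, you land on precisely the one-parameter optimisation that constitutes the heart of the paper's proof, and your proposal contains no substitute for the cumulant trick that resolves it.
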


In addition, we prove the following result regarding odd moments of
log-concave random variables:
\begin{thm}
Let X be a centered log-concave random variable. If $q$ is an odd
integer, and $p$ is a real number such that $1<p<q,$ then we have:
\begin{equation}
\frac{\lvert\mathbb{E}[X^{q}]\rvert^{\frac{1}{q}}}{\lVert X\rVert_{p}}\leq\frac{\lvert\mathbb{E}[\Gamma^{q}]\rvert^{\frac{1}{q}}}{\lVert\Gamma\rVert_{p}},
\end{equation}
with equality if and only if $X$ coincides with $\Gamma$ in law
(The difference here is that the absolute value in the numerator is
taken outside of the expectation).
\end{thm}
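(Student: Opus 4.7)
My plan is to invoke the reduction to a one-parameter family of extremizers which has been established earlier in the paper for norm-ratio functionals on centered log-concave random variables, and then optimize over that family. Because $R(X) := \lvert\mathbb{E}[X^{q}]\rvert^{1/q}/\lVert X\rVert_{p}$ is invariant under the reflection $X \mapsto -X$ (the absolute value absorbs the sign flip that odd $q$ produces) and under positive scaling, the equality case ``$X$ coincides with $\Gamma$'' should be read modulo this reflection, so that $-\Gamma$ attains the bound as well. After replacing $X$ by $-X$ if necessary we may therefore assume $\mathbb{E}[X^{q}] \ge 0$ and drop the absolute value; it then suffices to maximize the scalar quantity $R(X_{t})$ across the one-parameter family $\{X_{t}\}$, with $t = 0$ corresponding to the one-sided exponential $\Gamma$ and positive $t$ giving strictly two-sided log-concave densities.

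The explicit side of the calculation is routine: the substitution $Y = \Gamma + 1 \sim \mathrm{Exp}(1)$ together with the binomial theorem yields
\[
\mathbb{E}[\Gamma^{q}] = \sum_{k=0}^{q}\binom{q}{k}(-1)^{q-k}k! = q!\sum_{j=0}^{q}\frac{(-1)^{j}}{j!},
\]
while $\lVert\Gamma\rVert_{p}^{p}$ admits a representation in terms of upper and lower incomplete Gamma integrals. What remains, and is the heart of the argument, is to show that $t \mapsto R(X_{t})$ attains its maximum at $t = 0$ strictly. I would attempt this by differentiating $\log R(X_{t})$ in $t$, verifying that the one-sided derivative at $t = 0^{+}$ is strictly negative, and complementing this with a global convexity or monotonicity argument ruling out interior maxima.

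The main obstacle is the general real exponent $p \in (1,q)$. In Theorem 4 one can exploit the fact that $p = 2$ and $q$ even turn both moments into polynomial expressions in the parameters of the family; here $\lVert X_{t}\rVert_{p}^{p}$ is a genuinely non-elementary integral, so the required monotonicity must be proved by an integral-comparison rather than by closed-form algebra. A natural line of attack is to look for a log-convexity identity of the form ``$\mathbb{E}[X_{t}^{q}]$ and $\lVert X_{t}\rVert_{p}^{p}$ are both log-convex in $t$, with the former growing strictly slower,'' or to reduce the comparison to a ratio of moments of $\mathrm{Exp}(1)$ against a shifted version thereof, where classical Gamma-function monotonicities can be brought to bear. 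Strict equality then follows from the strict monotonicity just established, combined with the uniqueness provided by the earlier one-parameter reduction: any maximizer must lie in the family, and within the family only the endpoint $\Gamma$ (or, by the initial reflection, $-\Gamma$) attains the bound.
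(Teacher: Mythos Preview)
Your proposal takes a significant detour from the paper's route and contains a genuine gap. The paper does not prove Theorem~5 by first reducing to the one-parameter family $\Gamma^{s}$ and then optimizing over $s$; rather, it observes that Theorem~5 is an immediate corollary of Theorem~14, since for odd integer $q$ one has $\lvert X\rvert^{q}\,\mathrm{sgn}(X)=X^{q}$. Theorem~14 in turn is the log-concave analogue of Theorem~10 and is proved by \emph{direct} comparison with $\Gamma$ itself (not with the family $\Gamma^{s}$): one normalizes so that the $p$-th absolute moments agree, sets $h=g-f_{X}$, and uses that $\log g$ is affine on $[-1,\infty)$ while $\log f_{X}$ is concave to conclude that $h$ changes sign at most three times. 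The order-$3$ Chebyshev system $\{1,\,x,\,\lvert x\rvert^{p},\,\lvert x\rvert^{q}\mathrm{sgn}(x)\}$ from Lemma~8, together with the four constraints (density, centering, $p$-moment equality, and the assumed strict signed $q$-moment inequality), then forces at least four sign changes --- a contradiction. No optimization over a family is ever needed, and the equality case falls out of the same argument.

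Your plan has two concrete problems. First, the reduction you invoke (Theorem~12) is stated for the norm ratio $\lVert X\rVert_{q}/\lVert X\rVert_{p}$, not for the signed-moment functional $\lvert\mathbb{E}[X^{q}]\rvert^{1/q}/\lVert X\rVert_{p}$; these are different functionals, and in fact the proof of Theorem~12 \emph{uses} Theorem~14 as an ingredient (to locate the parameter $s_{0}$ via the intermediate value theorem), so the logical dependence runs opposite to what you assume. Second, and more seriously, the step you flag as ``the heart of the argument'' --- showing that $t\mapsto R(X_{t})$ is maximized at the endpoint for arbitrary real $p\in(1,q)$ --- is precisely the step you do not carry out: you only list possible lines of attack (log-convexity, Gamma-function monotonicity) without establishing any of them. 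In the paper's approach this entire obstacle simply does not arise, because the sign-change argument pins the extremizer to $\Gamma$ from the outset.
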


A different proof of Theorem 5 for the case $p=2,\ q=3$ can be found
in Bubeck and Eldan \cite{key-4}. The problem of finding the maximum
norm ratio over different families of random variables has been discussed
in several other instances in the mathematical literature. The fact
that the norm ratios of marginals of convex bodies are bounded by
some universal constant was proven first by Berwald in \cite{key-1}
and then by Borell in \cite{key-2} (one can also find this result
in a survey \cite{key-10} by Milman and Pajor). Borel and Berwald
also found the maximum norm ratio of log-concave random variables
supported on the real half-line (see \cite{key-11} for proof). Another
example for a problem of a similar type is the Khintchine inequality,
which deals with the maximum norm ratio of linear combinations of
independent Bernoulli random variables. The sharp Khintchine inequality
for $p=2$ was established by Haagerup in \cite{key-6}. A simpler
proof by Nazarov and Podkorytov can be found in \cite{key-12}. The
case where $p\text{ and }q$ are even numbers was also solved. For
a more in-depth history of the Khintchine inequality as well as a
proof of the even case see \cite{key-11}. Eskenazis, Nayar, and Tkocs
also found the maximum norm ratio for the $s$-norm ball when $p=2$
in the aforementioned \cite{key-5}. Throughout this paper, by a convex
body we mean a convex compact set with non-empty interior, and by
a cone we mean the convex hull of a subset of a an affine space of
co-dimension one and a point outside of the said affine space.

\section*{acknowledgements}

This paper is a part of the author's master thesis under the suprevision of Professor Bo'az Klartag, and was partially funded by the Israel Science Foundation.

\section{the dimension dependent case}

Recall $X\sim Uni(K)$ for some n-dimensional, centered, convex body
$K$. We start by stating the well-known Brunn principle (see e.g.
\cite{key-14} for proof):
\begin{lem}
For any $\theta\in S^{n-1}$, the random variable $X\cdot\theta$
has a density $f_{\theta}(x)$ w.r.t. the Lebesgue measure. In addition
$f_{\theta}$ is supported on a compact interval and $f_{\theta}^{1/n-1}$
is concave in its support. Finally, if $f_{\theta}^{1/n-1}$ is affine
in its support, then $K$ is a (pherhaps truncated) cone and $\theta$
is normal to $K$'s base.
\end{lem}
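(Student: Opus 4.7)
The plan is to slice $K$ by hyperplanes perpendicular to $\theta$ and feed the resulting $(n-1)$-dimensional sections into the Brunn-Minkowski inequality. I would first take, without loss of generality, $\theta=e_n$, so that by Fubini the density of $X\cdot\theta = X_n$ is proportional to $t\mapsto\mathrm{vol}_{n-1}(K_t)$, where $K_t:=K\cap\{x_n=t\}$ is viewed as a subset of $\mathbb{R}^{n-1}$. Compactness of $K$ makes the support a bounded closed set, and convexity of $K$ forces it to be an interval $[a,b]$; this disposes of the existence and compact support claims.

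Next, for the concavity, I would exploit the convexity of $K$ to obtain the Minkowski inclusion
\[
(1-\lambda)K_s + \lambda K_t \;\subseteq\; K_{(1-\lambda)s+\lambda t}
\]
for every $s,t\in[a,b]$ and $\lambda\in(0,1)$, after the natural identification of the parallel hyperplanes with $\mathbb{R}^{n-1}$. Applying the classical Brunn-Minkowski inequality in dimension $n-1$ to the left-hand side and taking $(n-1)$-th roots then yields concavity of $t\mapsto\mathrm{vol}_{n-1}(K_t)^{1/(n-1)}$, equivalently of $f_\theta^{1/(n-1)}$, on $[a,b]$.

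The hard part will be the rigidity clause. Assuming $f_\theta^{1/(n-1)}$ is affine on $[a,b]$, both the Minkowski inclusion above and the Brunn-Minkowski inequality must hold with equality for every pair $s<t$ in the interior of the support. The equality case of Brunn-Minkowski (see e.g.\ \cite{key-14}) then forces the sections $K_s$ and $K_t$ to be homothetic convex bodies in $\mathbb{R}^{n-1}$, so there exist a fixed convex body $B\subset\mathbb{R}^{n-1}$, a nonnegative scalar function $r:[a,b]\to[0,\infty)$, and a center function $c:[a,b]\to\mathbb{R}^{n-1}$ with $K_t = c(t) + r(t)\,B$ for every $t\in[a,b]$. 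Here $r$ is affine because $\mathrm{vol}_{n-1}(K_t)^{1/(n-1)}$ is, and $c$ must also be affine because the Minkowski inclusion has become an equality. Consequently $K$ is swept out by an affine family of homothets of $B$, which makes it a (possibly truncated, possibly oblique) cone whose base lies in a hyperplane perpendicular to $\theta$; the apex, if present, occurs at whichever endpoint $r$ vanishes. This is exactly the claim that $K$ is a perhaps truncated cone with $\theta$ normal to its base.
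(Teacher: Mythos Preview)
Your argument is correct and is precisely the standard derivation of Brunn's principle from the Brunn--Minkowski inequality in $\mathbb{R}^{n-1}$, including the rigidity analysis via the equality cases of the inclusion and of Brunn--Minkowski. The paper itself does not prove this lemma at all; it simply states it as ``well-known'' and refers the reader to Schneider's book \cite{key-14}. So there is nothing to compare: you have supplied the textbook proof where the paper only gives a citation.

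One very minor point worth tightening in your write-up: to invoke the equality case of Brunn--Minkowski you implicitly use that the slices $K_s,K_t$ have nonempty interior in $\mathbb{R}^{n-1}$, i.e.\ that $f_\theta>0$ on the open interval $(a,b)$. This follows immediately from the concavity you already established together with the fact that $K$ has nonempty interior in $\mathbb{R}^n$, but it would be cleaner to say so explicitly before writing $K_t=c(t)+r(t)B$ with a fixed full-dimensional $B$.
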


Bruun's principle gives us valuable information about the way the
graph of $f_{\theta}$ can intersect the graphs of other certain functions
of interest. We shall make use of this information in conjunction
with the theory of Chebyshev systems (for an in-depth discussion on
Chebyshev systems see \cite{key-8}):
\begin{defn}
A system of real functions $\left\{ u_{i}(t)\right\} _{0}^{k}$ is
called a Chebyshev system of order k if any linear combination:
\begin{equation}
p(t)=\sum_{i=0}^{k}a_{i}u_{i}(t)\hspace{6em}(\sum_{i=0}^{k}a_{i}^{2}>0)
\end{equation}
has at most k roots.
\end{defn}

One readily sees that $\left\{ u_{i}(t)\right\} _{0}^{k}$ is a Chebyshev
system if and only if the determinant of the matrix $\left\{ u_{i}(t_{j})\right\} _{i\times j}$
doesn't vanish for any $t_{0}<\cdots<t_{k}\in\mathbb{R}$. This in
turn is equivalent to the existence of a linear combination of the
form (2.1) whose roots are exactly $t_{1},\ldots,t_{k}.$ For a fixed
set of numbers $p_{0}<\cdots<p_{k}$ where $p_{0}=0,$ $p_{1}=1$
we define:
\begin{equation}
u_{i}(t)=\begin{cases}
\lvert t\rvert^{p_{i}}sgn(t) & i\ \text{is odd}\\
\lvert t\rvert^{p_{i}} & i\ \text{is even}
\end{cases}.
\end{equation}

\begin{lem}
For $k<5$, $\left\{ u_{i}(t)\right\} _{0}^{k}$ is a Chebyshev system.
\end{lem}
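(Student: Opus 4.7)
The plan is to reduce the Chebyshev property to the generalized Descartes rule of signs (Laguerre's theorem for exponential sums), applied separately on $(0,\infty)$ and $(-\infty,0)$, and then glue the two counts by a combinatorial argument exploiting the parity alternation built into the definition of $u_i$.

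For $t>0$ every $u_i(t)$ reduces to $t^{p_i}$, so a linear combination $p(t)=\sum_{i=0}^{k}a_{i}u_{i}(t)$ agrees on the positive half-line with the generalized polynomial $P_+(t):=\sum_{i=0}^{k}a_{i}t^{p_{i}}$. For $t<0$, writing $s=-t>0$ gives $p(-s)=\sum_{i=0}^{k}\epsilon_{i}a_{i}s^{p_{i}}=:P_{-}(s)$, where $\epsilon_{i}=+1$ when $i$ is even and $\epsilon_{i}=-1$ when $i$ is odd. Both $P_+$ and $P_-$ are generalized polynomials with strictly increasing real exponents $p_0<p_1<\cdots<p_k$. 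The classical extension of Descartes' rule to such sums bounds the number of positive roots of any generalized polynomial $\sum c_{i}t^{q_{i}}$ by the number $V(c)$ of sign changes in the coefficient sequence after deleting zero entries. Applying this to $P_+$ and $P_-$, and noting that $0$ is a root of $p$ iff $a_{0}=0$, yields
\[
\#\{t\in\mathbb{R}:p(t)=0\}\leq V(a_{0},\ldots,a_{k})+V(\epsilon_{0}a_{0},\ldots,\epsilon_{k}a_{k})+[a_{0}=0],
\]
where $[\,\cdot\,]$ denotes the Iverson bracket.

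It therefore suffices to establish the combinatorial inequality
\[
V(a_{0},\ldots,a_{k})+V(\epsilon_{0}a_{0},\ldots,\epsilon_{k}a_{k})+[a_{0}=0]\leq k\qquad(k\leq 4).
\]
The key observation is that consecutive indices $i,i+1$ have opposite parity, so $\epsilon_{i}\neq\epsilon_{i+1}$; hence for any pair of consecutive nonzero coefficients $(a_{i},a_{i+1})$ the sign-change indicators in the two sequences are complementary and contribute exactly $1$ to the sum. When all $a_{i}$ are nonzero this yields $V(a)+V(\epsilon a)=k$ and $a_{0}\neq 0$, giving the bound with equality.

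The main obstacle is the sparse-coefficient case. If some $a_{i}$ vanish then two consecutive nonzero entries may share parity, so their two sign-change indicators become equal and may contribute $0$ or $2$ to the sum instead of $1$. For $k\leq 4$ the number of zero patterns is small enough to dispatch by a finite case analysis, organised by which subset of $\{a_{1},a_{2},a_{3},a_{4}\}$ vanishes; in each case any potential surplus of $+1$ from an equal-parity pair is compensated either by a boundary pair lost to the zero or by the fact that the root at $0$ is added only when $a_{0}=0$, which itself removes the leftmost pair from the count. No new idea is needed beyond this bookkeeping.
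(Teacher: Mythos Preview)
Your approach is correct and genuinely different from the paper's. The paper argues directly: for $k=4$ it applies Rolle once to pass to $p'$, writes the non-affine part as $q(t)$, and then analyses the sign of $q'(t)$ on each half-line by hand (reducing to the intersection of a convex power with a line) to pin down the monotonicity pattern of $q$ and hence bound the zeros of $p'$. Your route via Laguerre/Descartes on each half-line plus a parity-based sign-change count is more structural and avoids this ad hoc monotonicity analysis. In fact your combinatorial inequality $V(a)+V(\epsilon a)+[a_0=0]\le k$ holds for \emph{every} $k$, so no finite case analysis is needed: writing the nonzero indices as $i_1<\cdots<i_m$, each consecutive pair $(i_j,i_{j+1})$ contributes exactly $1$ to $V(a)+V(\epsilon a)$ when the parities differ (and then $i_{j+1}-i_j\ge 1$) and at most $2$ when the parities agree (and then $i_{j+1}-i_j\ge 2$), so in either case the contribution is $\le i_{j+1}-i_j$; summing gives $V(a)+V(\epsilon a)\le i_m-i_1\le k-[a_0=0]$. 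Thus your argument in fact proves the lemma for all $k$, which the paper's hands-on method does not obviously yield. The trade-off is that the paper's proof is entirely self-contained, whereas yours imports the generalized Descartes rule as a black box.
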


\begin{proof}
We shall prove the lemma for $k=4$ (the other cases can be proved
in a similar yet easier way). Let $p(t)$ be of the form (2.1), we
may assume $a_{4}=1$ since if $a_{4}=0$ we get the case $k=3.$
By Rolle's theorem it is enough to show $p'(t)$ has at most 3 roots.
Define:
\[
q(t)=a_{2}u'_{2}(t)+a_{3}u'_{3}(t)+u'_{4}(t).
\]
\begin{center}
\newcommand{\image}[1]{\includegraphics[width=0.7\linewidth]{#1}}
    \image{img_lemma_8}
\end{center}
It is enough to show that there exists a line segment $[b,c]\subset\mathbb{R}$
such that $q$ is monotonically decreasing in $[b,c]$ and monotonically
increasing both in $[c,\infty)$ and in $(-\infty,b]$. For $t>0$
we have:
\begin{equation}
q'(t)=\tilde{a}_{2}t^{p_{2}-2}+\tilde{a}_{3}t^{p_{3}-2}+\tilde{a}_{4}t^{p_{4}-2}
\end{equation}
\[
\tilde{a_{i}}=a_{i}p_{i}(p_{i}-1).
\]
 Notice that $a_{i}\text{ and }\tilde{a_{i}}$ have the same sign.
Again by normalizing we may assume $\tilde{a}_{4}=1$. We now have
$q'(t)=0$ if and only if:
\[
\tilde{a}_{2}+\tilde{a}_{3}t^{p_{3}-p_{2}}+t^{p_{4}-p_{2}}=0.
\]
Changing variables to $x=t^{p_{3}-p_{2}}$ and defining $\beta=\frac{p_{4}-p_{2}}{p_{3}-p_{2}}>1$
we get:
\begin{equation}
\tilde{a}_{2}+\tilde{a}_{3}x+x^{\beta}=0.
\end{equation}

Since $x^{\beta}$ is strictly convex, it intersects any affine function
at most twice. Further inspection shows that the number of positive
roots of (2.4) depends on $a_{2},a_{3}$ in the following way (depicted
in Figure 2.1):
\begin{enumerate}
\item If $a_{2}\leq0$, then (2.4) has exactly one positive solution, since
the function $\lvert x\rvert^{\beta}$ is strictly convex in all of
$\mathbb{R}$, but the line $-\tilde{a}_{2}-\tilde{a}_{3}x$ must
intersect it once in $(0,\infty)$ and one in $(-\infty,0]$.\medskip{}
\item If $a_{2}>0$, $a_{3}\leq0$, then (2.4) has at most two positive
solutions.\medskip{}
\item If $a_{2}>0$,$a_{3}>0$, then (2.4) has no positive solutions, since
in this case, the left-hand side of (2.4) is positive.
\end{enumerate}

We now examine $t<0$. Setting $s=-t>0$ we get:
\begin{equation}
q'(s)=\tilde{a}_{2}s^{p_{2}-2}-\tilde{a}_{3}s^{p_{3}-2}+\tilde{a}_{4}s^{p_{4}-2}.
\end{equation}
But this is just (2.3) with the sign of $a_{3}$ changed. Thus, if
case 1 holds, for $t>0$, $q(t)$ can only decrease up to some point
$c\geq0$ and then must increase for $t>c$ (the limit of $q$ at
infinity is infinity). For $t<0,$ $q(t)$ must increase up to some
point $b\leq0$ and may only decrease when $t>b$. Thus the claim
of the lemma holds for case 1. If case 2 holds, for $t>0$, $q(t)$
may change its monotonicity at most twice, but for $t<0$, $q(t)$
must be monotonically increasing (since for $t<0$ when checking our
list of possible roots of the derivative, we change the sign of $a_{3}$).
Thus the claim of the lemma still holds. Case 3 is just a reflection
of case 2 and so the lemma holds for all cases.
\end{proof}
Now let $\theta_{1},\theta_{2}$ be two outer normal unit vectors
to the centered simplex $\Delta_{n}$ and recall that $\Gamma_{n}\sim Uni(\Delta_{n}).$
Define:
\begin{equation}
\Gamma_{n}^{s}=\frac{s\theta_{1}-(1-s)\theta_{2}}{\lvert s\theta_{1}-(1-s)\theta_{2}\rvert}\cdot\Gamma_{n}\hspace{4em}0\leq s\leq1
\end{equation}
 and let $g_{n}^{s}(t)$ be the density of $\Gamma_{n}^{s}$. We now
state a few important properties of $g_{n}^{s}$:
\begin{lem}
The following properties hold for $g_{n}^{s}$:
\begin{enumerate}
\item $g_{n}^{0}(x)=g_{n}^{1}(-x).$\medskip{}
\item $\left(g_{n}^{0}(x)\right)^{\frac{1}{n-1}}$ is affine on its support.\medskip{}
\item For $0<s<1,$ let $\text{Supp }g_{n}^{s}=[a_{s},b_{s}],$ then $\exists c_{s}\in[a_{s},b_{s}]$
such that $\left(g_{n}^{s}(x)\right)^{\frac{1}{n-1}}$ is affine in
$[a_{s},c_{s}]$ and in $[c_{s},b_{s}].$ In addition, $g_{n}^{s}$
is continuous.
\end{enumerate}
\end{lem}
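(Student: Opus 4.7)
The plan is to pass to barycentric coordinates. Writing $\Gamma_n = \sum_{i=0}^n \lambda_i v_i$ with $v_0, \ldots, v_n$ the vertices of $\Delta_n$ and $(\lambda_0, \ldots, \lambda_n)$ uniform on the standard $n$-simplex (the Dirichlet$(1,\ldots,1)$ law), one has $\Gamma_n^s = \sum_i \lambda_i x_i$ where $x_i := \theta(s) \cdot v_i$ and $\theta(s) := (s\theta_1 - (1-s)\theta_2)/|s\theta_1 - (1-s)\theta_2|$. Label $v_1, v_2$ so that $\theta_i$ is the outer unit normal to the facet $F_i$ opposite $v_i$; the remaining $n-1$ vertices lie in the ridge $R := F_1 \cap F_2$, where $\theta_1$ and $\theta_2$ both take their support values $c_1, c_2$, and so they share a common projection $x_R$ under $\theta(s)$. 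A direct calculation gives $x_1 - x_R = s(\theta_1 \cdot v_1 - c_1)/|s\theta_1-(1-s)\theta_2|$ and the analogous formula for $x_2 - x_R$; since $\theta_i \cdot v_i < c_i$ (because $v_i \notin F_i$), this yields $x_1 < x_R < x_2$ for $0 < s < 1$, so only three distinct projection values occur.

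For property (1), the regular simplex admits an orthogonal self-map swapping $v_1, v_2$ and fixing $R$ pointwise; this preserves the uniform law on $\Delta_n$ and sends $\theta_1$ to $\theta_2$, so $\theta_1 \cdot \Gamma_n \stackrel{d}{=} \theta_2 \cdot \Gamma_n$, giving $\Gamma_n^1 \stackrel{d}{=} -\Gamma_n^0$ and hence $g_n^0(x) = g_n^1(-x)$. For property (2), at $s = 0$ the projections of the $n$ vertices of $F_2$ coincide (all equal $-c_2$), so $\Gamma_n^0 = -c_2 + (c_2 - \theta_2 \cdot v_2)\lambda_2$ depends affinely on the single coordinate $\lambda_2$. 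The marginal of $\lambda_2$ is Beta$(1,n)$, with density proportional to $(1-\lambda_2)^{n-1}$, and a linear change of variables turns this into $g_n^0(x) \propto (-\theta_2 \cdot v_2 - x)^{n-1}$ on the support, whose $(n-1)$-th root is affine.

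Property (3) is the heart of the lemma. Set $a := x_1 - x_R < 0$ and $b := x_2 - x_R > 0$, so that $\Gamma_n^s - x_R = a\lambda_1 + b\lambda_2$. The marginal of $(\lambda_1, \lambda_2)$ is Dirichlet$(1, 1, n-1)$, with density proportional to $(1-\lambda_1-\lambda_2)^{n-2}$ on the triangle $T := \{\lambda_1, \lambda_2 \geq 0,\, \lambda_1 + \lambda_2 \leq 1\}$. To compute the density of $Z := a\lambda_1 + b\lambda_2$ at level $z$, I slice $T$ by the line $\{a\lambda_1 + b\lambda_2 = z\}$: eliminating $\lambda_1$ via the delta function and substituting $u := 1 - \lambda_1 - \lambda_2$ reduces the slice integral to an incomplete power integral $\int_0^{*} u^{n-2}\,du$. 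After unwinding the substitution separately in the two sign regimes one obtains
\[
g_n^s(x_R + z) \;\propto\; \begin{cases} \bigl(1 + z/|a|\bigr)^{n-1}, & z \in [-|a|, 0], \\ \bigl(1 - z/b\bigr)^{n-1}, & z \in [0, b], \end{cases}
\]
with matching values at $z = 0$. This simultaneously proves continuity of $g_n^s$ and that $(g_n^s)^{1/(n-1)}$ is piecewise affine with the single interior break $c_s = x_R$. The only delicate step is the bookkeeping of the signs of $a, b$ when choosing the slice limits; once that is handled correctly, the two affine pieces automatically meet continuously at $z = 0$ and all three assertions follow.
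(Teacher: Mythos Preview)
Your proof is correct and complete, but it follows a genuinely different route from the paper's. The paper argues property (3) purely geometrically: the hyperplane $\{\theta(s)\cdot x = x_R\}$ contains the ridge $R=F_1\cap F_2$ and cuts the edge $v_1v_2$ at a single point $w$, so it splits $\Delta_n$ into the two $n$-simplices $\mathrm{conv}(R,v_1,w)$ and $\mathrm{conv}(R,v_2,w)$, each of which is a cone over a base orthogonal to $\theta(s)$; the equality case of the Brunn principle (Lemma~6) then gives that $(g_n^s)^{1/(n-1)}$ is affine on each piece, with continuity at the common face. Property (2) is likewise read off directly from the equality case of Brunn for the cone $\Delta_n$ itself in the direction $\theta_2$.

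Your approach instead passes to the Dirichlet model and computes the density explicitly, obtaining the closed form $g_n^s(x_R+z)\propto (1+z/|a|)^{n-1}$ on one side and $(1-z/b)^{n-1}$ on the other. This is more laborious but has the advantage of producing the exact density (which could be useful for quantitative purposes), and it avoids invoking Brunn's principle altogether. The paper's argument is shorter and more conceptual once Lemma~6 is in hand, and it makes transparent why the break point $c_s$ is exactly the level of the ridge $R$. Both arguments implicitly rely on the simplex being regular for property (1); you make this explicit, while the paper's ``symmetry of the simplex'' leaves it tacit.
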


\begin{proof}
(1) follows from the symmetry of the simplex. (2) follows from the
equality case of Brunn's principle. To see (3) holds notice that we
can split $\Delta_{n}$ into two simplices whose intersection is a
face orthogonal to the unit vector $\frac{s\theta_{1}-(1-s)\theta_{2}}{\lvert s\theta_{1}-(1-s)\theta_{2}\rvert}$.
Thus, using the equality case of Brunn's principle once again, we
see that the claim holds (see e.g. \cite{key-9} for more details).
\end{proof}
Before proving Theorem 1, we give a similar result with regard to
the following auxiliary operator:
\begin{equation}
\alpha^{*}(K):=\max_{\theta\in S^{n-1}}\frac{\lvert\mathbb{E}[\lvert X\cdot\theta\rvert^{q}sgn(x)]\rvert^{\frac{1}{q}}}{\lVert X\cdot\theta\rVert_{p}}.
\end{equation}
This operator can be thought of as measuring the largest asymmetry
between tails of the marginals of K.
\begin{thm}
For any centered convex body $K\subset\mathbb{R}^{n}$:
\begin{equation}
\alpha^{*}(K)\leq\alpha^{*}(\Delta_{n}),
\end{equation}
where equality holds if and only if K is a cone. In addition, the
unit vectors in which the maximum is achieved are normal to (one of
the) the base(s) of the cone.
\end{thm}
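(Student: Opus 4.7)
The plan is to adapt the Chebyshev-system technique of Eskenazis, Nayar, and Tkocs \cite{key-5} to the present centered setting: I will compare an arbitrary marginal of $K$ with a member of the one-parameter family $\{g_n^s\}_{s\in[0,1]}$ from Lemma~10 by matching the $0$th, $1$st and $p$th moments, and then use the order-three Chebyshev system $\{1,t,|t|^p,|t|^q\mathrm{sgn}(t)\}$ from Lemma~8 to convert this into a bound on the signed $q$th moment.

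Fix $\theta\in S^{n-1}$ and let $f=f_\theta$ be the density of $X\cdot\theta$. Replacing $\theta$ by $-\theta$ if necessary, I may assume $M_f:=\int|t|^q\mathrm{sgn}(t)f(t)\,dt\ge 0$, and rescaling $X\cdot\theta$ (which preserves the ratio in $\alpha^{\ast}$) I may assume $\int|t|^p f=1$. The centeredness of $K$ and $\Delta_n$ gives $\int tf=\int t g_n^s=0$ for every $s\in[0,1]$. For each $s$, pick the unique $\lambda_s>0$ such that $\tilde g_s(t):=\lambda_s g_n^s(\lambda_s t)$ has $\int|t|^p\tilde g_s=1$; then $f$ and $\tilde g_s$ agree in their $0$th, $1$st and $p$th moments. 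Set $M(s):=\int|t|^q\mathrm{sgn}(t)\tilde g_s\,dt$; this is continuous in $s$ with $M(0)=-M(1)$ by Lemma~10(1). The strategy is to find $s^{\ast}\in[0,1]$ with $M(s^{\ast})=M_f$ via the intermediate value theorem and then conclude $f\equiv\tilde g_{s^{\ast}}$ almost everywhere via the Chebyshev rigidity argument below.

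The central technical step, and main obstacle, is to show that $f-\tilde g_s$ has at most three sign changes on $\mathbb{R}$. By Lemma~6, $f^{1/(n-1)}$ is concave on its compact support; by Lemma~10(3), $\tilde g_s^{1/(n-1)}$ is affine on each of two subintervals $[a_s,c_s]$ and $[c_s,b_s]$ partitioning $\mathrm{supp}(\tilde g_s)$ and vanishes outside $[a_s,b_s]$. A concave function crosses an affine one at most twice on any interval, so $f^{1/(n-1)}-\tilde g_s^{1/(n-1)}$ has at most two sign changes on each affine piece, with further sign changes only coming from the boundary transitions at $a_s$ and $b_s$ (where $\tilde g_s$ vanishes but $f$ need not). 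A careful case analysis on the relative positions of $\mathrm{supp}(f)$ and $\mathrm{supp}(\tilde g_s)$ and on the values at the interface $c_s$ must then trim the total down to at most three — a count just tight enough to match the order of the Chebyshev system.

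Given the sign-change count, Lemma~8 with $k=3$ says $\{1,t,|t|^p,|t|^q\mathrm{sgn}(t)\}$ is Chebyshev of order three. Choosing $s^{\ast}$ with $M(s^{\ast})=M_f$ makes $f-\tilde g_{s^{\ast}}$ have vanishing moment against each of the four basis functions. Were $f\ne\tilde g_{s^{\ast}}$, its sign changes would occur at points $t_1<t_2<t_3$, and by the Chebyshev property there is a non-trivial combination $h=c_0+c_1 t+c_2|t|^p+c_3|t|^q\mathrm{sgn}(t)$ vanishing exactly at these points; then $h\cdot(f-\tilde g_{s^{\ast}})$ has constant sign while $\int h(f-\tilde g_{s^{\ast}})\,dt=0$ by the four matched moments, forcing $f=\tilde g_{s^{\ast}}$ a.e. Hence $M_f=M(s^{\ast})\le\max_{s\in[0,1]}|M(s)|\le\alpha^{\ast}(\Delta_n)^{q}$, since each $\tilde g_s$ arises by rescaling the marginal of $\Delta_n$ in a direction on the arc from $\theta_1$ to $\theta_2$; taking $q$th roots gives $\alpha^{\ast}(K)\le\alpha^{\ast}(\Delta_n)$. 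Equality $f=\tilde g_{s^{\ast}}$, combined with the equality case of Lemma~6, then forces $K$ to be a cone with $\theta$ normal to one of its bases (the interior-kink possibility $0<s^{\ast}<1$ being ruled out by a refined sign-change analysis). Finally, the residual case $M_f\notin M([0,1])$ is excluded by applying the same Chebyshev argument at $s\in\{0,1\}$, where Lemma~10(2) makes $\tilde g_s^{1/(n-1)}$ genuinely affine, with only three matched moments: the forced sign of $\int|t|^q\mathrm{sgn}(t)(f-\tilde g_s)\,dt$ then contradicts the hypothesis $|M_f|>|M(s)|$.
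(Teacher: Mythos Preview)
Your proposal contains a genuine gap and is also substantially more complicated than the paper's argument. The paper does \emph{not} use the one-parameter family $\{g_n^s\}$ for this theorem at all; it compares $f_\theta$ directly with $g_n:=g_n^0$, whose $(n-1)$th root is affine on a single interval. After normalising $p$th moments and assuming by contradiction that $\int |x|^q\mathrm{sgn}(x)f_\theta>\int|x|^q\mathrm{sgn}(x)g_n$, the difference $h=g_n-f_\theta$ has at most three sign changes (concave versus affine on the interior of the support, plus the single jump of $g_n$ at its left endpoint), and the order-three Chebyshev system $\{1,x,|x|^p,|x|^q\mathrm{sgn}(x)\}$ then rules out the cases of one, two, and three sign changes in turn. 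The family $\{g_n^s\}$ enters only later, in the proof of Theorem~1, where a fifth function $|x|^r\mathrm{sgn}(x)$ is added to the Chebyshev system precisely to absorb one extra sign change.

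That extra sign change is exactly the flaw in your main line. For $0<s<1$, the function $(\tilde g_s)^{1/(n-1)}$ is piecewise affine on \emph{two} subintervals (Lemma~9(3)), and a concave function can cross such a two-piece tent four times, not three; the paper says this explicitly in the proof of Theorem~1 (``$h=g_n^{s_0}-f_\theta$ can change sign at most 4 times''). So your promised ``careful case analysis'' trimming the count to three cannot succeed in general, and the order-three Chebyshev system is too weak to run the rigidity step at an interior $s^\ast$. Worse, the conclusion you draw from that step --- that $f\equiv\tilde g_{s^\ast}$ a.e.\ whenever the four moments match --- would imply that \emph{every} marginal of \emph{every} centered convex body coincides with some marginal of the simplex, which is plainly false. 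What you call the ``residual case'' at the very end --- comparing with $s\in\{0,1\}$, where the density is genuinely affine in the $(n-1)$th root, matching only three moments, and reading off the forced sign of $\int|t|^q\mathrm{sgn}(t)(f-\tilde g_s)\,dt$ --- is in fact the entire argument the paper gives, and it is correct and sufficient on its own; the intermediate-value construction of $s^\ast$ is both unnecessary and unsound here.
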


\begin{proof}
As before, for some $\theta\in S^{n-1}$, let $f_{\theta}(x)$ be
the density of $X\cdot\theta.$ We shall also abbreviate $g_{n}^{0}$
by $g_{n}$ for the duration of this proof. By the homogeneity of
(2.8) we may assume:
\begin{equation}
\int\rvert x\rvert^{p}f_{\theta}(x)dx=\int\rvert x\rvert^{p}g_{n}(x)dx=1.
\end{equation}
Assume by contradiction that:
\begin{equation}
\int\rvert x\rvert^{q}sgn(x)f_{\theta}(x)dx>\int\rvert x\rvert^{q}sgn(x)g_{n}(x)dx.
\end{equation}
Our general idea is to use Lemma 8 to interlace the difference function
$h:=g_{n}-f_{\theta}$ with appropriate linear combinations of $\{1,x,\lvert x\rvert^{p},\lvert x\rvert^{q}sgn(x)\}$.
We then use four linear constraints (i.e. (2.9) (2.10) and the fact
that $f_{\theta}$ and $g_{n}$ are centered densities), to force
$h$ to change sign at least four times. This will contradict Lemmas
6 and 9, which state that $\left(g_{n}\right)^{\frac{1}{n-1}}$ is
non-negative, affine on $[a_{0},b_{0}]$ and continuous at every point
but $a_{0},$ and $\left(f_{\theta}\right)^{\frac{1}{n-1}}$ is non-negative
and concave on its support. Thus $h$ may change sign at most twice
at $(a_{1},\infty)$ and not at all at $(-\infty,a_{1})$, making
for a maximum of three possible sign changes. We now show (2.10) implies
$h$ must change sign at least four times. First, since $f_{\theta}$
and $g_{n}$ are densities, we get:
\[
\int h(x)dx=0.
\]
Thus h must change sign at least once. Now if $h$ changes sign only
once, say at a point $x_{1}\in\mathbb{R}$, then $(x-x_{1})h(x)$
never changes sign, but since $f_{\theta}$ and $g_{n}$ are centered
densities, we get:
\[
\int(x-x_{1})h(x)dx=0,
\]
 which contradicts (2.10). Assume now $h$ changes sign exactly twice
at points $x_{1},\ x_{2}$. Then by Lemma 8 there exists $p(x)=d_{1}+d_{2}x+\lvert x\rvert^{p}$
whose roots are exactly $x_{1},x_{2}$ and so $p(x)h(x)$ never changes
sign. Using (2.9) we now get:
\[
\int p(x)h(x)dx=0,
\]
which contradicts (2.10). Finally, assume $h$ changes sign exactly
three times at points $x_{1},x_{2},x_{3}$, again by Lemma 8 there
exists $p(x)=d_{1}+d_{2}x+d_{3}\lvert x\rvert^{p}+\lvert x\rvert^{q}sgn(x),$
whose roots are exactly $x_{1},x_{2},x_{3}$. Also note that in this
case $f_{\theta}$ and $g_{n}$ must intersect twice on $(a_{1},\infty)$
and so $h(x)$ must be non-negative for large values of $x$, thus
$h(x)p(x)\geq0$ for all $x\in\mathbb{R}$. However (2.9) and (2.10)
give us:
\[
\int p(x)h(x)dx<0,
\]
which is a contradiction. We thus showed:
\[
\frac{\mathbb{E}[\lvert X\cdot\theta\rvert^{q}sgn(X\cdot\theta)]}{\lVert X\cdot\theta\rVert_{p}}\leq\frac{\mathbb{E}[\lvert\Gamma_{n}^{1}\rvert^{q}sgn(\Gamma_{n}^{1})]}{\lVert\Gamma_{n}^{1}\rVert_{p}}
\]
and that equality implies that $X\cdot\theta\text{ coincides with }\Gamma_{n}^{1}$
in law, which in turn implies K is a cone and $\theta$ is normal
to $K$'s base by the equality case of Brunn's principle.
\end{proof}
\begin{proof}[Proof of Theorem 1]
\noindent  The proof is very similar to that of Theorem 10. Using
the same notations as before we can again assume by homogeneity :
\begin{equation}
\int\rvert x\rvert^{p}f_{\theta}(x)dx=\int\rvert x\rvert^{p}g_{n}^{s}(x)dx=1\hspace{4em}\forall0\leq s\leq1
\end{equation}
and assume by contradiction:
\begin{equation}
\int\rvert x\rvert^{q}f_{\theta}(x)dx>\int\rvert x\rvert^{q}g_{n}^{s}(x)dx\hspace{4em}\forall0\leq s\leq1.
\end{equation}
Now choose some $p<r<q$. Theorem 10 tells us:
\[
\int\rvert x\rvert^{r}sgn(x)g_{n}^{1}(x)dx\leq\int\rvert x\rvert^{r}sgn(x)f_{\theta}(x)dx\leq\int\rvert x\rvert^{r}sgn(x)g_{n}^{o}(x)dx,
\]
where the first inequality just follows from property (1) in Lemma
9. Thus from continuity we may choose some $0\leq s_{0}\leq1$ such
that:
\begin{equation}
\int\rvert x\rvert^{r}sgn(x)f_{\theta}(x)dx=\int\rvert x\rvert^{r}sgn(x)g_{n}^{s_{0}}(x)dx.
\end{equation}
We now proceed exactly as we did before; by Lemmas 6 and 9 the difference
function $h=g_{n}^{s_{0}}-f_{\theta}$ can change sign at most 4 times.
The same arguments as before show that $h$ must change sign at least
4 times, but if $h$ changes sign exactly 4 times at points $x_{1},\ldots,x_{4}$,
again by Lemma 8 there exists $p(x)=d_{1}+d_{2}x+d_{3}\lvert x\rvert^{p}+d_{4}\lvert x\rvert^{r}sgn(x)+\lvert x\rvert^{q}$,
whose roots are exactly $x_{1},\ldots,x_{4}$. Since $g_{n}^{s_{0}}$
and $f_{\theta}$ must intersect twice in $[c_{s_{0}},b_{s_{0}}]$
we have $h(x)\geq0$ for large values of $x$ and so $p(x)h(x)\geq0$.
But (2.11) (2.12) and the fact $g_{n}^{s_{0}}$ and $f_{\theta}$
are centered densities give us:
\[
\int p(x)h(x)dx<0,
\]
which is a contradiction. We thus showed:
\[
\frac{\lVert X\cdot\theta\rVert_{q}}{\lVert X\cdot\theta\rVert_{p}}\leq\frac{\lVert\Gamma_{n}\cdot\theta_{s_{0}}\rVert_{q}}{\lVert\Gamma_{n}\cdot\theta_{s_{0}}\rVert_{p}}
\]
 and so:
\[
\alpha(K)\leq\alpha(\Delta_{n}).
\]
\renewcommand{\qedsymbol}{$\blacksquare$}
\end{proof}
As mentioned in the Introduction, the proof of Theorem 1 shows that
the marginal for which the maximum norm ratio in the simplex (and
in any other convex body) is achieved belongs to the 1-parameter family
$\Gamma_{n}^{s}$ and thus any convex body which maximizes $\alpha$
can be split into two cones by some hyperplane. Quite unexpectedly,
calculations show that for different values of $p$ and $q$, the
maximum norm ratio is achieved in different values of $s$.

Notice also that all we needed for the proof of Theorem 1 was the
Chebyshev system property of $\{1,x,\lvert x\rvert^{p},\lvert x\rvert^{r}sgn(x),\lvert x\rvert^{q}\}$.
Thus, using other Chebyshev systems might bring forth new inequalities.
Let us give one more example of a Chebyshev system:
\begin{lem}
For any $k\in\mathbb{N},$ let $\phi\in C^{k}(\mathbb{R})$, if $\phi^{(k)}>0$
the set $\{1,x,...,x^{k-1},\phi\}$ is a Chebyshev system.
\end{lem}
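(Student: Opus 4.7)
The plan is to reduce the claim to an iterated application of Rolle's theorem on the $k$th derivative. Consider any nontrivial linear combination
\[
p(x) = a_0 + a_1 x + \cdots + a_{k-1} x^{k-1} + a_k \phi(x),
\]
and split into two cases according to whether the coefficient $a_k$ vanishes.

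First I would handle the easy case $a_k = 0$: then $p$ is a nonzero polynomial of degree at most $k-1$, so it has at most $k-1 < k$ real roots, and we are done. The substantive case is $a_k \neq 0$. Differentiating $k$ times gives $p^{(k)}(x) = a_k\, \phi^{(k)}(x)$, which by hypothesis has constant (nonzero) sign on all of $\mathbb{R}$. Now suppose toward contradiction that $p$ has at least $k+1$ distinct real roots $t_0 < t_1 < \cdots < t_k$. By Rolle's theorem applied in each interval $(t_{i-1}, t_i)$, the derivative $p'$ has at least $k$ distinct roots. Iterating this reasoning $k$ times, $p^{(k)}$ must vanish at least once, contradicting $p^{(k)} = a_k \phi^{(k)} \neq 0$ everywhere. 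Hence $p$ has at most $k$ distinct roots, which is exactly the Chebyshev system property with the indexing in Definition~7 (here the system has $k+1$ functions indexed $0, \ldots, k$, and the corresponding order is $k$).

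There is essentially no main obstacle: the argument is a short Rolle chase and the only care needed is to verify that we are allowed to apply Rolle's theorem $k$ times, which is justified by $\phi \in C^k(\mathbb{R})$ (so all intermediate derivatives are continuous on the closed intervals between consecutive roots and differentiable on the open ones). No additional identification of the zero set of $\phi^{(k)}$ is required beyond the pointwise strict sign condition, and no regularity on $\phi$ past the $k$th derivative is needed.
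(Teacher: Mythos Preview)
Your proof is correct and follows exactly the same approach as the paper's: iterate Rolle's theorem $k$ times on a linear combination with at least $k+1$ roots to force a zero of $p^{(k)}=a_k\phi^{(k)}$, contradicting $\phi^{(k)}>0$. You are simply more explicit than the paper about the trivial case $a_k=0$ and about the regularity needed to apply Rolle.
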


\begin{proof}
We use Rolle's Theorem iteratively. If for some $p(x)=\sum_{i=0}^{k-1}d_{i}x^{i}+d_{k}\phi(x)$
there are more then $k$ roots, then $p^{(k)}(x)=\phi^{(k)}(x)$ has
at least one root, which is, of course, a contradiction (this proof
is taken from \cite[Chapter 2, Example E]{key-10}).
\end{proof}
\begin{proof}[Proof of Theorem 3.]
 The proofs when $k=3,4$ are identical to those of Theorems 10 and
1 respectively.

\renewcommand{\qedsymbol}{$\blacksquare$}
\end{proof}

\section{the dimension independent case}

Looking at the log-concave case, we notice the role of our one-parameter
family $\Gamma_{n}^{s}$ is taken by following simpler family of random
variables:
\begin{equation}
\Gamma^{s}=s\Gamma-(1-s)\Gamma'.
\end{equation}
 Here, $\Gamma,\Gamma'$ are i.i.d with density $g(x)=e^{-(x+1)}1_{[-1,\infty)}$.
Indeed, replacing the power of $\frac{1}{n-1}$ by log, Lemma 6 becomes
the definition of a log-concave random variable, and Lemma 9 is easy
to verify for $g^{s}(x)$, the densities of $\Gamma^{s}$. Thus, one
can reformulate theorems 1,3 and 10 for the log-concave case and prove
them in the exact same way:
\begin{thm}
Let X be a centered log concave random variable, then:
\begin{equation}
\frac{\lVert X\rVert_{q}}{\lVert X\rVert_{p}}\leq\max_{0\leq s\leq1}\frac{\lVert\Gamma^{s}\rVert_{q}}{\lVert\Gamma^{s}\rVert_{p}}.
\end{equation}
\end{thm}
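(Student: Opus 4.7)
The plan is to mirror the two-step argument used in Section 2, first establishing an asymmetric-tail analog of Theorem 10 in the log-concave setting, and then using it as an intermediate step to derive Theorem 12. The conceptual substitution is that whenever $f_\theta^{1/(n-1)}$ appears in Lemmas 6 and 9 we instead work with $\log f_X$: concavity replaces $(n-1)$-concavity, and the piecewise affine structure of $(g_n^s)^{1/(n-1)}$ is replaced by the piecewise affine structure of $\log g^s$. The Chebyshev-system Lemma 8 is dimension-free and can be reused verbatim.

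First I would verify the analog of Lemma 9 for the one-parameter family $g^s$. Since $\Gamma$ has density $e^{-(x+1)}1_{[-1,\infty)}$, the random variable $\Gamma^s = s\Gamma - (1-s)\Gamma'$ is (up to sign) a linear combination of two independent one-sided exponentials, so a direct convolution computation shows that $g^s$ is continuous, that $\log g^s$ is piecewise affine with exactly two affine pieces joined at a single breakpoint $c_s$, and that $g^0(x) = g^1(-x)$. In particular, $\log g^0$ is affine on its whole support, playing the role of property (2) of Lemma 9.

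Next I would establish the sign-change bound on $h := g^s - f_X$, where $f_X$ is the density of the centered log-concave $X$. On each interval $I$ on which $\log g^s$ is affine, the difference $\log g^s - \log f_X$ is convex (affine minus concave), so it has at most two zeros, hence $g^s$ and $f_X$ cross at most twice on $I$. With two such pieces this yields at most four interior crossings; a standard check at the boundaries of $\mathrm{supp}(f_X)$ and $\mathrm{supp}(g^s)$ shows no additional sign changes arise. With this bound in hand, the proof of Theorem 10 transplants directly: assuming (after normalizing the $p$-th moments) that $\int |x|^q \mathrm{sgn}(x) f_X \, dx > \int |x|^q \mathrm{sgn}(x) g^0 \, dx$, the three linear constraints (normalization, centering, matching $p$-th moments) force at least three sign changes of $h$, and applying Lemma 8 to $\{1, x, |x|^p, |x|^q \mathrm{sgn}(x)\}$ yields a polynomial $p(x)$ with $p(x) h(x) \geq 0$ yet $\int p \cdot h \, dx < 0$, a contradiction.

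Finally, to prove Theorem 12 itself, I would normalize so that $\int |x|^p f_X = \int |x|^p g^s = 1$ for every $s$, assume for contradiction that $\int |x|^q f_X > \int |x|^q g^s$ for all $s$, choose an intermediate exponent $p < r < q$, and use the log-concave version of Theorem 10 together with $g^0(x) = g^1(-x)$ and continuity in $s$ to find $s_0 \in [0,1]$ for which the signed $r$-th moments of $f_X$ and $g^{s_0}$ agree. This gives four linear constraints, forcing $h = g^{s_0} - f_X$ to change sign at least four times, and applying Lemma 8 to the five-function system $\{1, x, |x|^p, |x|^r \mathrm{sgn}(x), |x|^q\}$ produces the same integral contradiction. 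The main obstacle I expect is bookkeeping the sign-change count around the breakpoint $c_{s_0}$ and the endpoints of the supports (especially since $f_X$ may be compactly supported while $g^{s_0}$ is not), and verifying that $h$ is non-negative for large $x$ so that $p(x) h(x) \geq 0$ globally; this requires exploiting the explicit exponential right-tail of $g^{s_0}$ to dominate $f_X$ asymptotically on the outermost affine piece. Closure of the log-concave class under weak limits, together with a density approximation by marginals of high-dimensional convex bodies, then removes any lingering regularity assumptions on $f_X$.
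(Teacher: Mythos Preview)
Your proposal is correct and follows essentially the same approach as the paper: the paper's proof of Theorem 12 is literally the statement that one replaces the $\tfrac{1}{n-1}$-power by $\log$ in Lemmas 6 and 9, verifies the piecewise-affine structure of $\log g^s$, and then reruns the proofs of Theorems 10 and 1 verbatim. Your outline is in fact more detailed than what the paper writes out; the only superfluous step is the final approximation by marginals of convex bodies, since the sign-change and Chebyshev arguments apply directly to any centered log-concave density.
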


\begin{thm}
Let X be a log concave random variable in isotropic position and let
$\phi\in C^{k}(\mathbb{R})$ and $\phi^{(k)}(x)>0,\ \forall x\in\mathbb{R}$
where $k\in\{3,4\}.$ We then have:
\begin{equation}
\mathbb{E}[\phi(X)]\leq\max_{0\leq s\leq1}\mathbb{E}[\phi(\Gamma^{s})].
\end{equation}
If $k=3$ the maximum is achieved at $s=0$ and equality implies $X$
coincides with $\Gamma$ in law.
\end{thm}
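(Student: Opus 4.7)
The plan is to reproduce the proofs of Theorems 10 and 1 almost verbatim, with Brunn's principle (Lemma 6) replaced by the definition of log-concavity of $f_{X}$, and the family $\{\Gamma_{n}^{s}\}$ replaced by the family $\{\Gamma^{s}\}$. Before starting, one checks the analog of Lemma 9 for the densities $g^{s}$ of $\Gamma^{s}$: $\log g^{0}$ and $\log g^{1}$ are affine on their supports $(-\infty,1]$ and $[-1,\infty)$ respectively, and for $0<s<1$ the density $g^{s}$ is continuous, supported on all of $\mathbb{R}$, with $\log g^{s}$ piecewise affine with two pieces meeting at some $c_{s}\in\mathbb{R}$. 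All of this follows by direct computation from the convolution of the (scaled, reflected) exponential densities of $s\Gamma$ and $-(1-s)\Gamma'$.

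The first step is the log-concave analog of Theorem 10: for centered log-concave $X$,
\begin{equation*}
\frac{\bigl|\mathbb{E}\bigl[|X|^{q}\mathrm{sgn}(X)\bigr]\bigr|^{1/q}}{\lVert X\rVert_{p}}\leq\frac{\bigl|\mathbb{E}\bigl[|\Gamma|^{q}\mathrm{sgn}(\Gamma)\bigr]\bigr|^{1/q}}{\lVert\Gamma\rVert_{p}}.
\end{equation*}
Normalize the $p$-th moments to $1$, assume strict inequality, and set $h:=g-f_{X}$ with $g=g^{1}$. The three linear constraints (both densities integrate to $1$, both are centered, matching $p$-th moments) together with Lemma 8 for the Chebyshev system $\{1,x,|x|^{p},|x|^{q}\mathrm{sgn}(x)\}$ force $h$ to change sign at least three times, by exactly the argument of Theorem 10. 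On the other hand $h$ changes sign at most three times: on $(-\infty,-1)$ we have $h=-f_{X}\leq0$, on $[-1,\infty)\cap\mathrm{supp}(f_{X})$ the function $\log f_{X}-\log g$ is concave and hence contributes at most two sign changes, and the remaining sign change can only come from the single jump discontinuity of $g$ at $-1$. A short case analysis on the relative position of $\mathrm{supp}(f_{X})$ and $-1$ confirms the bound of three, yielding the contradiction.

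With the signed-moment inequality established, I now mimic the proof of Theorem 1. Normalize $\int|x|^{p}f_{X}=\int|x|^{p}g^{s}=1$ and assume for contradiction $\int|x|^{q}f_{X}>\int|x|^{q}g^{s}$ for every $s\in[0,1]$. Fix $p<r<q$. The signed-moment inequality gives $|\int|x|^{r}\mathrm{sgn}(x)f_{X}\,dx|\leq|\int|x|^{r}\mathrm{sgn}(x)g^{1}\,dx|$, so the signed $r$-th moment of $f_{X}$ lies in the closed interval with endpoints given by the signed $r$-th moments of $g^{0}$ and $g^{1}$; by continuity of $s\mapsto\int|x|^{r}\mathrm{sgn}(x)g^{s}\,dx$, there exists $s_{0}\in[0,1]$ at which the signed $r$-th moments of $f_{X}$ and $g^{s_{0}}$ coincide. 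The four linear constraints together with Lemma 8 for $\{1,x,|x|^{p},|x|^{r}\mathrm{sgn}(x),|x|^{q}\}$ now force $h:=g^{s_{0}}-f_{X}$ to change sign at least four times. But on each of the two affine pieces of $\log g^{s_{0}}$, the set $\{f_{X}>g^{s_{0}}\}$ is an interval (the positivity set of the concave function $\log f_{X}-\log g^{s_{0}}$), so $\{f_{X}>g^{s_{0}}\}$ is a union of at most two intervals and $h$ changes sign at most four times; outside $\mathrm{supp}(f_{X})$ we have $h=g^{s_{0}}>0$ and no further changes appear. The Chebyshev polynomial $p(x)$ interpolating these four zeros agrees in sign with $h$ at $+\infty$, giving $p(x)h(x)\geq0$ pointwise, while the constraints and strict $q$-moment inequality give $\int p(x)h(x)\,dx<0$, the final contradiction. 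The main subtlety throughout is the sign-change count in the presence of possible boundary discontinuities of $f_{X}$; the remedy is that the bound on the number of connected components of $\{f_{X}>g^{s_{0}}\}$ depends only on concavity of $\log f_{X}-\log g^{s_{0}}$ on each affine piece of $\log g^{s_{0}}$ and is insensitive to the boundary behavior of $f_{X}$.
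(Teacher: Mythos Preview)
Your argument is structurally correct and your sign-change analysis for $h$ is fine, but you have proved the wrong theorem. What you wrote establishes the log-concave analogs of Theorems 10 and 1 (i.e.\ Theorems 14 and 12 in the paper: the signed-moment and norm-ratio inequalities), not the statement at hand, which concerns a general $\phi$ with $\phi^{(k)}>0$ and the \emph{isotropic} hypothesis. The isotropic position fixes $\int 1$, $\int x$, and $\int x^{2}$ (not $\int |x|^{p}$), so the relevant Chebyshev system is $\{1,x,\ldots,x^{k-1},\phi\}$ from Lemma 11, not $\{1,x,|x|^{p},|x|^{q}\mathrm{sgn}(x),\ldots\}$ from Lemma 8.

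The fix is mechanical. For $k=3$: compare $f_{X}$ directly to $g=g^{1}$; the three matched functionals are $1,x,x^{2}$; assuming $\int\phi f_{X}>\int\phi g$, Lemma 11 forces $h=g-f_{X}$ to change sign at least three times, while log-affinity of $g$ on $[-1,\infty)$ versus log-concavity of $f_{X}$ caps this at three, and the sign of $h$ for large $x$ yields the contradiction exactly as in your write-up. For $k=4$: first apply the $k=3$ case to $\phi(x)=x^{3}$ (and its reflection) to trap $\mathbb{E}[X^{3}]$ between $\mathbb{E}[(\Gamma^{0})^{3}]$ and $\mathbb{E}[(\Gamma^{1})^{3}]$, pick $s_{0}$ matching the third moment by continuity, and then run your four-sign-change argument with the system $\{1,x,x^{2},x^{3},\phi\}$ against $g^{s_{0}}$. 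Everything you wrote about the piecewise log-affine structure of $g^{s}$ and the resulting bound on sign changes carries over verbatim.
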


\begin{thm}
Let X be a centered log concave random variable, then:
\begin{equation}
\frac{\mathbb{E}[\lvert X\rvert^{q}sgn(X)]}{\lVert X\rVert_{p}}\leq\frac{\mathbb{E}[\lvert\Gamma\rvert^{q}sgn(\Gamma)]}{\lVert\Gamma\rVert_{p}},
\end{equation}
with equality if and only if $X$ coincides with $\Gamma$ in law.
\end{thm}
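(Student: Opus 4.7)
The strategy is to adapt the proof of Theorem 10 from the finite dimensional convex body setting to the log concave setting, with $g_n$ replaced by the density $g(x)=e^{-(x+1)}\mathbf{1}_{[-1,\infty)}(x)$ of $\Gamma$, and $f_\theta$ replaced by $f_X$, the density of $X$. The two structural facts used in the original proof --- that $g_n^{1/(n-1)}$ is affine on its support and $f_\theta^{1/(n-1)}$ is concave on its support --- are replaced by the observations that $\log g(x)=-(x+1)$ is affine on $[-1,\infty)$ and that $\log f_X$ is concave on its support by the very definition of log concavity. Thus the role played in Section 2 by Brunn's principle is played here by log concavity itself.

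By homogeneity I normalize $\int |x|^p f_X\,dx=\int |x|^p g\,dx=1$ and assume toward a contradiction that $\int |x|^q sgn(x) f_X\,dx>\int |x|^q sgn(x) g\,dx$. Setting $h=g-f_X$, the constraint counting argument from the proof of Theorem 10 carries over verbatim: the identities $\int h\,dx=0$, $\int x\,h\,dx=0$, and $\int |x|^p h\,dx=0$ (coming from $f_X$ and $g$ being centered probability densities together with the normalization), together with the strict inequality $\int |x|^q sgn(x)\,h\,dx<0$, combined with Lemma 8 applied to the Chebyshev system $\{1,x,|x|^p,|x|^q sgn(x)\}$, force $h$ to change sign at least four times via the same case analysis on one, two, and three potential sign changes.

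The matching structural upper bound is as follows. On $(-\infty,-1)$ the density $g$ vanishes and $f_X\ge 0$, so $h=-f_X\le 0$ and no sign change occurs there. On $[-1,\infty)$, wherever both densities are positive, $\log f_X-\log g=\log f_X+(x+1)$ is concave and hence has at most two zeros, so $h$ admits at most two sign changes on $[-1,\infty)$. Combined with a single possible sign change at the jump of $g$ at $x=-1$, this yields at most three sign changes in total, contradicting the lower bound of four. For the equality case one traces back the argument: equality forces $\log f_X-\log g$ to be affine on the intersection of supports, which together with the integral constraints and the matching up of the supports pins down $f_X=g$ almost everywhere, that is, $X$ coincides with $\Gamma$ in law.

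The main obstacle beyond a direct translation of Theorem 10's proof is the careful bookkeeping of sign changes at the jump point $x=-1$ and at the boundary of the support of $f_X$. Unlike in Section 2, where $g_n$ and $f_\theta$ have comparable compact supports, here $g$ jumps at $-1$ and $f_X$ may have unbounded support or a jump at the boundary of its support; one must verify rigorously that these boundary transitions together contribute at most one additional sign change rather than inflating the count beyond three, before the Chebyshev argument can be invoked.
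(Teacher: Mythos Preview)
Your proposal is correct and follows exactly the approach the paper intends: the paper explicitly states that Theorem 14 is proved ``in the exact same way'' as Theorem 10, with log-concavity replacing Brunn's principle and $g(x)=e^{-(x+1)}\mathbf{1}_{[-1,\infty)}$ replacing $g_n$. You have also correctly identified the one point that genuinely requires care in the translation---that the jump of $g$ at $-1$ together with the (possibly finite) right endpoint of $\operatorname{supp} f_X$ contribute in total at most one extra sign change beyond the two coming from the concavity of $\log f_X-\log g$, which is indeed what a short case analysis confirms.
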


Note that Theorem 5 follows from Theorem 14. The following question
naturally arises: which is the maximizer of the norm ratio among $\Gamma^{s}$
for $0\leq s\leq1$? We shall use the new convenient formula (3.1)
to answer this question when $p=2$ and $q$ is an even integer, but
first, let us recall that the cumulants $\left\{ k_{n}(X)\right\} _{n=0}^{\infty}$
of a random variable $X$ are defined by:
\begin{equation}
log\left(\mathbb{E}[e^{tX}]\right)=\sum_{n=0}^{\infty}\frac{k_{n}(X)}{n!}t^{n}.
\end{equation}
See e.g. \cite{key-7} for proof that when $X$ is log-concave $\mathbb{E}[e^{tX}]$
is analytic in a small neighborhood of zero. Let us also define $\mu_{n}(X)$
to be the n-th moment of $X$. Two useful properties of the cumulants
of a random variable are the following:
\begin{enumerate}
\item For any $X,Y$ independent random variables, $a,b\in\mathbb{R}$ and
$n\in\mathbb{N},$ we have:
\begin{equation}
k_{n}(aX+bY)=a^{n}k_{n}(X)+b^{n}k_{n}(Y).
\end{equation}
\item Knowing the value of the cumulants of $X$ (and assuming again that
X is centered) we can restore the value of its moments using the following
formula:
\begin{equation}
\mu_{n}(X)=\sum_{i=1}^{n}\binom{n-1}{i-1}k_{i}(X)\mu_{n-i}(X).
\end{equation}
\end{enumerate}
We now calculate the cumulants of $\Gamma$:
\[
log\left(\mathbb{E}[e^{t\Gamma}]\right)=log\left(\int_{-1}^{\infty}e^{-x-1}\cdot e^{tx}dx\right)=log\left(\frac{e^{-t}}{1-t}\right)=-t-log(1-t)=\sum_{n=2}^{\infty}\frac{t^{n}}{n},
\]
and so:
\begin{equation}
k_{0}(\Gamma)=k_{1}(\Gamma)=0,\ \text{and }k_{n}(\Gamma)=\left(n-1\right)!\hspace{3em}n\geq2.
\end{equation}

\begin{proof}[Proof of Theorem 4]
 Reparametrizing (3.2) in Theorem 12, we get:
\begin{equation}
\left(\frac{\lVert X\rVert_{n}}{\lVert X\lVert_{2}}\right)^{n}\leq\max_{0\leq t\leq\frac{\pi}{2}}\mu_{n}\left(\cos(t)\Gamma-\sin(t)\Gamma'\right),
\end{equation}
with equality if and only if $\exists0\leq t_{0}\leq\frac{\pi}{2}$
such that:
\begin{equation}
\frac{X}{\lVert X\rVert_{2}}=\cos(t_{0})\Gamma-\sin(t_{0})\Gamma'.
\end{equation}
Thus, the only remaining thing to prove is:
\[
\mu_{n}\left(\cos(t)\Gamma-\sin(t)\Gamma'\right)<\mu_{n}(\Gamma)\hspace{4em}\forall0<t<\frac{\pi}{2}.
\]
From the discussion above we see that:
\begin{eqnarray}
\lvert k_{n}\left(\cos(t)\Gamma-\sin(t)\Gamma'\right)\rvert & = & \lvert\cos^{n}(t)+(-\sin(t))^{n}\rvert(n-1)!\nonumber \\
 & < & (n-1)!=\lvert k_{n}(\Gamma)\rvert\hspace{6em}\forall n\geq2,\ 0<t<\frac{\pi}{2}.
\end{eqnarray}
Here, the strict inequality comes from the classic norm inequality
$\lVert x\rVert_{n}<\lVert x\rVert_{2}$ for any vector $x$ with
at least two non zero coordinates. We now assume by induction that:
\[
\lvert\mu_{i}\left(\cos(t)\Gamma-\sin(t)\Gamma'\right)\rvert<\lvert\mu_{i}(\Gamma)\rvert\hspace{3em}\forall2<i<n,\ 0<t<\frac{\pi}{2}.
\]
We thus have:
\begin{align*}
\lvert\mu_{n}\left(\cos(t)\Gamma-\sin(t)\Gamma'\right)\rvert & \leq\sum_{i=1}^{n}\binom{n-1}{i-1}\lvert k_{i}(\cos(t)\Gamma-\sin(t)\Gamma')\rvert\lvert\mu_{n-i}(\cos(t)\Gamma-\sin(t)\Gamma')\rvert\\
 & <\sum_{i=1}^{n}\binom{n-1}{i-1}\lvert k_{i}(\Gamma)\rvert\lvert\mu_{n-i}(\Gamma)\rvert=\lvert\mu_{n}(\Gamma)\rvert.
\end{align*}
This completes our proof.

\renewcommand{\qedsymbol}{$\blacksquare$}
\end{proof}
Theorem 12 reduces the problem of finding the maximum norm ratio over
all log-concave random variables into finding the maximum norm ratio
over $\Gamma^{s}$. If $p$ and $q$ are even integers, this problem
is equivalent to finding:
\begin{equation}
\max_{0\leq s\leq1}\frac{\left(r_{q}(s)\right)^{p}}{\left(r_{p}(s)\right)^{q}},
\end{equation}
where $r_{p}(s)=\mathbb{E}[s\Gamma+(s-1)\Gamma']^{p}$ is a family
of polynomials. We now present a technique, which worked for all even
integers up to 100, to prove the maximum is achieved exactly at $s=0$
and $s=1$.
\begin{thm}
Let X be a centered log concave random variable and let $p,\ q$ be
even numbers such that $p<q<100$ then:
\begin{equation}
\frac{\lVert X\rVert_{q}}{\lVert X\rVert_{p}}\leq\frac{\lVert\Gamma\rVert_{q}}{\lVert\Gamma\rVert_{p}}=\frac{\left(!q\right)^{\frac{1}{q}}}{\left(!p\right)^{\frac{1}{p}}},
\end{equation}
where equality holds if and only if $X$ coincides with $\pm\Gamma$
in law.
\end{thm}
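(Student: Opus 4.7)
The plan is to invoke Theorem 12 to reduce the statement to a one-parameter optimization over the family $\Gamma^{s}$, and then to convert that optimization into a polynomial inequality in $s$ (equivalently in $u = s(1-s)$) that can be certified by exact rational computation.

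First, by Theorem 12 applied to a centered log-concave $X$, it is enough to prove that for even $p < q$,
\begin{equation*}
\frac{\lVert \Gamma^{s}\rVert_{q}^{\,p}}{\lVert \Gamma^{s}\rVert_{p}^{\,q}} \;\le\; \frac{\lVert \Gamma\rVert_{q}^{\,p}}{\lVert \Gamma\rVert_{p}^{\,q}} \;=\; \frac{(!q)^{p}}{(!p)^{q}}, \qquad 0 \le s \le 1,
\end{equation*}
with equality only at $s=0$ (and, by symmetry, $s=1$). Since $p,q$ are even, both sides are polynomial in the moments $r_{k}(s) := \mathbb{E}[(s\Gamma - (1-s)\Gamma')^{k}]$ for $k=p,q$, so the inequality is equivalent to a polynomial inequality in $s$ on $[0,1]$.

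Second, I would compute $r_{p}(s)$ and $r_{q}(s)$ explicitly. From (3.6) together with (3.8), the cumulants of $\Gamma^{s}$ are
\begin{equation*}
k_{n}(\Gamma^{s}) \;=\; \bigl(s^{n} + (s-1)^{n}\bigr)(n-1)!, \qquad n \ge 2,
\end{equation*}
with $k_{0}=k_{1}=0$ since $\Gamma^{s}$ is centered. Feeding this into the recursion (3.7) expresses each $r_{k}(s)$ as an explicit polynomial in $s$ with rational coefficients. Swapping $\Gamma$ and $\Gamma'$ (which are i.i.d.) sends $\Gamma^{s}$ to $\Gamma^{1-s}$, so for even $k$ we have $r_{k}(s) = r_{k}(1-s)$. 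Consequently
\begin{equation*}
F(s) \;:=\; (!p)^{q}\, r_{q}(s)^{p} \;-\; (!q)^{p}\, r_{p}(s)^{q}
\end{equation*}
is symmetric under $s \mapsto 1-s$ and rewrites as a polynomial $G(u)$ in $u = s(1-s) \in [0, 1/4]$. Since $F(0) = 0$, we have $u \mid G(u)$, and we may factor $G(u) = -u \cdot H(u)$. The theorem (including the equality case, recovered from the equality case of Theorem 12) then reduces to the assertion that $H(u) > 0$ on $[0, 1/4]$.

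Third, for any specific pair $(p,q)$ with $q < 100$, the polynomial $H$ has rational coefficients and degree at most of order $pq$, so $H>0$ on $[0,1/4]$ is a finite check: one may use a Sturm-sequence count of real roots on $[0,1/4]$, or expand $H$ in the Bernstein basis on $[0,1/4]$ and verify that all Bernstein coefficients are strictly positive, or produce an explicit sum-of-squares certificate. The main obstacle is not any single pair $(p,q)$ but rather providing a scheme that succeeds \emph{uniformly} across the roughly $50^{2}$ admissible pairs. In practice I would carry out the Bernstein-positivity check by exact rational arithmetic, which has the advantage of being monotone under subdivision, so that if positivity fails to be visible on $[0,1/4]$ one subdivides and retries; the content of the proof is the verification that this process terminates for every even $p < q < 100$. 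The bound $q < 100$ in the statement precisely reflects that the proof is a finite, computer-assisted verification, and one should expect the same scheme to extend further at greater computational cost.
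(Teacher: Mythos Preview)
Your approach is correct and shares the paper's overall architecture: reduce via Theorem~12 to the one-parameter family $\Gamma^{s}$, then certify a polynomial inequality in $s$ on $[0,1]$ by exact computation. The paper, however, streamlines the computation in two ways you do not exploit. First, it reduces to consecutive even pairs $p=q-2$: since $\tfrac{1}{q}\log r_q(s)-\tfrac{1}{p}\log r_p(s)$ telescopes as a nonnegative combination of the increments $\tfrac{1}{k+2}\log r_{k+2}(s)-\tfrac{1}{k}\log r_k(s)$ over $k=p,p+2,\dots,q-2$, monotonicity on $[0,\tfrac12]$ for each step gives it for every pair, cutting your $\sim1200$ checks to $\sim49$. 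Second, instead of your direct difference $F(s)=(!p)^{q}r_q(s)^{p}-(!q)^{p}r_p(s)^{q}$, which has degree $pq$ (up to about $9400$ when $p=96$, $q=98$), the paper differentiates the log-ratio and works with $h_q(s)=(q-2)\,r_q'(s)\,r_{q-2}(s)-q\,r_{q-2}'(s)\,r_q(s)$, of degree at most $2q-3<200$; after the shift $s\mapsto s+\tfrac12$ and division by the forced factor $s(s-\tfrac12)(s+\tfrac12)$, the quotient $\tilde h_q$ is an even polynomial whose coefficients are all nonpositive except possibly the $s^{2}$-coefficient, and a single discriminant check disposes of that one. Your Bernstein/Sturm scheme is a valid certificate, but it would operate on much larger polynomials and many more instances; the paper's telescoping-plus-derivative device buys dramatically smaller objects and a more transparent sign pattern to verify.
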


\begin{proof}
Unfortunately, this proof is computer-assisted. First notice that
the rational function in (3.12) is symmetric with respect to $\frac{1}{2}$,
and that it is enough to prove our hypothesis for $p=q-2.$ Thus our
goal will be to show:
\[
\frac{d}{ds}log\left(\frac{\left(r_{q}(s)\right)^{q-2}}{\left(r_{q-2}(s)\right)^{q}}\right)\leq0\hspace{5em}\forall0\leq s\leq\frac{1}{2},
\]
or equivalently:
\begin{equation}
h_{q}(s):=(q-2)\cdot r'_{q}(s)\cdot r_{q-2}(s)-q\cdot r'_{q-2}(s)\cdot r_{q}(s)\leq0\hspace{3em}\forall0\leq s\leq\frac{1}{2}.
\end{equation}
Define now:
\[
\tilde{h}_{q}(s)=\frac{h_{q}(s-\frac{1}{2})}{s(s-\frac{1}{2})(s+\frac{1}{2})}=\sum a_{i,q}s^{2i}.
\]
Calculating the coefficients of the first few polynomials we get:
\begin{align*}
\tilde{h}_{4}(s) & =-96\\
\tilde{h}_{6}(s) & =-720(15+8s^{2}+144s^{4})\\
\tilde{h}_{8}(s) & =-1680(1485-2880s^{2}+105696s^{4}+104448s^{6}+268544s^{8})\\
\tilde{h}_{10}(s) & =-5040(269325-1323000s^{2}+72560880s^{4}+280339200s^{6}\\
 & +1409629440s^{8}+1162622976s^{10}+1050406912s^{12}).
\end{align*}
One can easily check that when $q\leq10,$ $a_{i.q}\leq0$ for all
$i\neq1$ and $a_{1}^{2}-4a_{0}a_{2}<0.$ Thus $\tilde{h}_{q}$ never
changes sign and $h_{q}(s)$ has the same sign as $s(s-1)(s+1)$,
which proves the theorem. Using computer assistance one can see the
same proof still holds for $q<100$. For details see \cite{Eitan}.
\end{proof}
We finish this paper with a conjecture which generalizes these empiric
results:
\begin{conjecture}
Let X be a centered log-concave random variable. Then for even integers
$p<q$, we have:
\begin{equation}
\frac{\lVert X\rVert_{q}}{\lVert X\lVert_{p}}\leq\frac{\lVert\Gamma\rVert_{q}}{\lVert\Gamma\rVert_{p}}=\frac{\left(!q\right)^{\frac{1}{q}}}{\left(!p\right)^{\frac{1}{p}}},
\end{equation}
with equality if and only if  $X$ coinsides with $\pm\Gamma$ in
law, where $\Gamma$ is a random variable with density $g(x)=e^{-(x+1)}1_{[-1,\infty)}$.

\renewcommand{\partname}{Chapter}
\end{conjecture}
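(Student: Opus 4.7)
The plan is to reduce the Conjecture to a family of explicit polynomial inequalities in one variable, and then attack those inequalities uniformly in $q$.

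First I would apply Theorem 12 to replace the arbitrary centered log-concave $X$ by the one-parameter family $\Gamma^s=s\Gamma-(1-s)\Gamma'$, so that the Conjecture becomes the assertion that the function $s\mapsto\lVert\Gamma^s\rVert_q/\lVert\Gamma^s\rVert_p$ on $[0,1]$ is maximised at the endpoints. For even $p,q$ the distribution of $\Gamma^{1-s}$ coincides with that of $-\Gamma^s$, so this ratio is symmetric about $s=1/2$ and it suffices to treat $s\in[0,1/2]$. Using the telescoping
\[
\frac{\lVert X\rVert_q}{\lVert X\rVert_p}=\prod_{j=p/2+1}^{q/2}\frac{\lVert X\rVert_{2j}}{\lVert X\rVert_{2j-2}},
\]
the Conjecture is reduced to the single case $p=q-2$ for each even $q\geq 4$; the base case $q=4$ is already contained in Theorem 4.

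For fixed even $q\geq 4$ set $r_p(s):=\mathbb{E}[(\Gamma^s)^p]$ and, following the proof of Theorem 16,
\[
h_q(s):=(q-2)\,r'_q(s)\,r_{q-2}(s)-q\,r'_{q-2}(s)\,r_q(s).
\]
The reduced Conjecture is the polynomial inequality $h_q(s)\leq 0$ on $[0,1/2]$. Writing $\Gamma^s=sE-(1-s)E'+(1-2s)$ with $E,E'$ i.i.d.\ standard exponentials, one computes the moment generating function
\[
M_{\Gamma^s}(t)=\frac{e^{(1-2s)t}}{(1-st)(1+(1-s)t)}
\]
and obtains the closed form
\[
r_p(s)=\sum_{j=0}^{p}\frac{p!}{(p-j)!}\,(1-2s)^{p-j}\bigl[s^{j+1}+(-1)^j(1-s)^{j+1}\bigr].
\]
The symmetries $r_p(1-s)=r_p(s)$ and $h_q(1-s)=-h_q(s)$, together with the vanishings $h_q(0)=h_q(1/2)=h_q(1)=0$, are then immediate. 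Factoring out these three roots gives $h_q(s)=s(s-\tfrac{1}{2})(s-1)\,\tilde h_q(s)$ with $\tilde h_q$ an even polynomial in $u=s-1/2$, and since $s(s-\tfrac{1}{2})(s-1)\geq 0$ on $[0,1/2]$, the reduced inequality becomes $\tilde h_q(u)\leq 0$ for $u\in[-1/2,1/2]$.

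The main obstacle is the uniform-in-$q$ control of $\tilde h_q$. The verification in Theorem 16 checks, for each $q<100$ separately, that the coefficients $a_{i,q}$ of $\tilde h_q$ satisfy $a_{i,q}\leq 0$ for $i\neq 1$ together with $a_{1,q}^2<4a_{0,q}a_{2,q}$, and no structural pattern in $q$ is visible from the tabulated data. The natural attempt is an induction on $q$ driven by the cumulant recurrence (3.7) with $k_i(\Gamma^s)=(s^i+(-1)^i(1-s)^i)(i-1)!$, aiming to extract a recursion for the $a_{i,q}$ whose sign behaviour can be tracked. Should that fail, alternative routes worth trying are an analytic argument exploiting the two simple poles of $M_{\Gamma^s}(t)$ at $t=1/s$ and $t=-1/(1-s)$ to produce an integral representation of $h_q$ whose sign is manifest; a local analysis of $h_q$ near $s=0$ combined with a monotonicity argument on $(0,1/2)$; or a direct probabilistic comparison between $\Gamma^s$ and $\Gamma$ that bypasses the polynomial manipulation. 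I expect the coefficient sign condition to be the genuine bottleneck, as no mechanism giving uniform control in $q$ has yet emerged.
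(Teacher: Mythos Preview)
The statement you are attempting to prove is explicitly labelled a \emph{Conjecture} in the paper; there is no proof in the paper to compare against. What the paper does contain is Theorem~16, a computer-assisted verification of the special case $q<100$, and your reduction reproduces that argument essentially line for line: the appeal to Theorem~12 to pass to the family $\Gamma^s$, the symmetry about $s=1/2$, the telescoping to $p=q-2$, the definition of $h_q(s)=(q-2)r'_q r_{q-2}-q r'_{q-2}r_q$, the vanishing $h_q(0)=h_q(1/2)=h_q(1)=0$, and the factorisation leaving an even polynomial $\tilde h_q$ in $u=s-1/2$ are all exactly what the paper does. Your closed form for $r_p(s)$ via the moment generating function is a minor addition not in the paper, but it does not by itself yield the required sign.

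The genuine gap is the one you yourself name: establishing $\tilde h_q\le 0$ on $[-1/2,1/2]$ uniformly in $q$. The paper handles this only by checking, for each $q<100$ separately, that $a_{i,q}\le 0$ for $i\neq 1$ and $a_{1,q}^2<4a_{0,q}a_{2,q}$, and then stops, posing the general statement as an open conjecture. None of the four approaches you sketch (a cumulant-driven recursion for the $a_{i,q}$, an integral representation from the two poles of $M_{\Gamma^s}$, a local-plus-monotonicity argument, or a direct probabilistic comparison) is carried out, and the paper gives no hint that any of them succeeds. So your proposal is a correct reformulation and a reasonable research plan, but it is not a proof, and the paper contains nothing beyond what you have already extracted.
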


{\normalsize 
\noindent Department of Mathematics, Weizmann Institute of Science, Rehovot, Israel. \\
{\it e-mail:} \verb"yam.eitan@weizmann.ac.il"}

\end{document}